\newcommand{\floor}[1]{{\lfloor #1 \rfloor}}
\newcommand{\Z}{\mathbb{Z}}
\newcommand{\ccF}{\mathcal{F}}
\newcommand{\ccT}{\mathcal{T}}
\newcommand{\ssF}{\mathscr{F}}
\newcommand{\ssT}{\mathscr{T}}
\theoremstyle{plain}
\newtheorem{theorem}  {Theorem}  [section]
\newtheorem{lemma}  [theorem]   {Lemma}
\newtheorem{fact} [theorem] {Fact}
\newtheorem{proposition} [theorem] {Proposition}
\newtheorem{claim} [theorem] {Claim}
\newtheorem{conjecture}[theorem] {Conjecture}
\newtheorem{problem}[theorem] {Problem}
\theoremstyle{definition}
\newtheorem{definition}[theorem] {Definition}
\newtheorem{example}[theorem] {Example}
\newcommand{\claimproof}{\renewcommand{\qedsymbol}{$\diamond$}}
\newcommand\psum{\mathbin{\raisebox{0.105ex}{%
  \rotatebox[origin=c]{180}{$\uplus$}}}}
\DeclareMathOperator{\setpsum}{\psum}
\DeclareMathOperator{\setssum}{+}
\newcommand{\bfs}[1]{{\mathbf{s}(#1)}}
\newcommand{\bfsb}[1]{{\bar{\mathbf{s}}(#1)}}
\newcommand{\SSs}[1]{\bfs{#1}}
\newcommand{\SSb}[1]{\bfsb{#1}}
\newcommand{\Sz}{\SSs{0}}
\newcommand{\Szb}{\SSb{0}}
\newcommand{\So}{\SSs{1}}
\newcommand{\Sob}{\SSb{1}}
\newcommand{\St}{\SSs{2}}
\newcommand{\Stb}{\SSb{2}}
\newcommand{\Sth}{\SSs{3}}
\newcommand{\ccFF}[1]{\ccF_{\SSs{#1}}}
\newcommand{\ccFFb}[1]{\ccF_{\SSb{#1}}}
\newcommand{\Fz}{\ccFF{0}}
\newcommand{\Fzb}{\ccFFb{0}}
\newcommand{\Fo}{\ccFF{1}}
\newcommand{\Fob}{\ccFFb{1}}
\newcommand{\Ft}{\ccFF{2}}
\newcommand{\Ftb}{\ccFFb{2}}
\newcommand{\ccFT}[1]{\ccT_{\SSs{#1}}}
\newcommand{\ccFTb}[1]{\ccT_{\SSb{#1}}}
\newcommand{\Tz}{\ccFT{0}}
\newcommand{\Tzb}{\ccFTb{0}}
\newcommand{\To}{\ccFT{1}}
\newcommand{\Tob}{\ccFTb{1}}
\newcommand{\Tt}{\ccFT{2}}
\newcommand{\Ttb}{\ccFTb{2}}
\newcommand{\us}{s} % was u 
\newcommand{\vt}{t} % was v
\newcommand{\FS}{\ccF_S}
\newcommand{\FT}{\ccF_T}
\begin{document}

\title{Obstructions for homomorphisms to odd cycles in series-parallel graphs}

 \author{Eun-Kyung Cho\thanks{
 Department of Mathematics, Hanyang University, Seoul, Republic of Korea.
  \texttt{ekcho2020@gmail.com}
 }
 \and Ilkyoo Choi\thanks{
 Department of Mathematics, Hankuk University of Foreign Studies, Yongin-si, Gyeonggi-do, Republic of Korea.
 \texttt{ilkyoo@hufs.ac.kr} and  Discrete Mathematics Group, Institute for Basic Science (IBS), Daejeon, Republic of Korea. 
 }
 \and Boram Park\thanks{
 Department of Mathematics, Ajou University, Suwon-si, Gyeonggi-do, Republic of Korea.
 \texttt{borampark@ajou.ac.kr}
 } 
 \and Mark Siggers\thanks{Department of Mathematics, Kyungpook National University, Daegu, Republic of Korea.
 \texttt{mhsiggers@knu.ac.kr} }}
\maketitle

%\smallskip \noindent \textbf{Subject class.} [2020]{05C15,05C10} 
% 05C10: planar graphs
% 05C15: graph colouring
% 05C21: flows in graphs 

\begin{abstract}
For a graph $H$, an {\em $H$-colouring} of a graph $G$ 
is a vertex map $\phi:V(G) \to V(H)$ such that adjacent vertices are mapped to adjacent vertices.  
A graph $G$ is {\em $C_{2k+1}$-critical} if $G$ has no $C_{2k+1}$-colouring but every proper subgraph of $G$ has a $C_{2k+1}$-colouring. 
We prove a structural characterisation of $C_{2k+1}$-critical graphs when $k  \geq 2$.
In the case that $k = 2$, we use the aforementioned charazterisation to show 
 a $C_3$-free series-parallel graph $G$ has a $C_5$-colouring if either $G$ has neither $C_8$ nor $C_{10}$, or $G$ has no two $5$-cycles sharing a vertex. 
\end{abstract}

\smallskip
\noindent \textbf{Keywords.}  {Jaeger's conjecture, circular colourings, series-parallel, graph homomorphisms}

\section{Introduction}

Following Gr\"otzsch's Theorem~\cite{Grotzsch}, which tells us that planar graphs with no $C_3$ have a proper $3$-colouring, Havel~\cite{Havel} conjectured the stronger statement that
any planar graph whose $C_3$s are no closer than some constant $c$ also admit a proper $3$-colouring.  This was proved by Dvo\v{r}\'ak, Kr\'{a}\v{l},
and Thomas in~\cite{DKT} with $c \geq 10^{100}$. 
Steinberg~\cite{Steinberg} conjectured that any planar graph with neither a $4$-cycle nor a $5$-cycle is also properly $3$-colourable, but a counterexample was found by Cohen-Addad et al.~\cite{SteinFalse}. 
In this paper we prove Havel- and Steinberg-type statements for $C_5$-colourings of series-parallel graphs. 

Given two graphs $G$ and $H$, 
a {\em homomorphism} from $G$ to $H$ is a vertex map $\phi:V(G) \to V(H)$ such that adjacent vertices are mapped to adjacent vertices.  A homomorphism of $G$ to $H$ is also called an {\em $H$-colouring} of $G$, as then a $K_k$-colouring of $G$ is exactly a proper $k$-colouring of $G$.

As a proper $3$-colouring is exactly a $C_3$-colouring, and larger odd cycles admit homomorphisms to smaller odd cycles, as $k \geq 2$ increases $C_{2k+1}$-colourings can be viewed as increasingly finer refinements of proper $3$-colourings.

 Jaeger's Modular Orientation Conjecture~\cite{Jaeger84}, when restricted to planar graphs, and stated for the dual graph, posits the following generalisation, the $k = 1$ case of which is Gr\"otzch's Theorem.

\begin{conjecture}\label{conj:Jaeger-dual}
For a positive integer $k$, every planar graph of girth at least $4k$ has a $C_{2k+1}$-colouring. 
\end{conjecture}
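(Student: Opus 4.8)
Since this is \emph{Jaeger's conjecture} in its planar dual form, which is open for every $k \geq 2$, I can only propose a line of attack and indicate where it stalls, rather than a complete argument.

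The plan is to argue by minimal counterexample and reduce the problem to a quantitative understanding of the densest obstructions. Suppose $G$ is planar, has girth at least $4k$, admits no $C_{2k+1}$-colouring, and has the fewest edges among all such graphs. Then every proper subgraph of $G$ has a $C_{2k+1}$-colouring, so $G$ is $C_{2k+1}$-critical. The whole strategy now rests on the structural characterisation of $C_{2k+1}$-critical graphs that this paper establishes: I would use it to extract a lower bound on the edge density of $G$, ideally of the form $|E(G)| \geq \alpha\,|V(G)| - \beta$, together with the impossibility of certain sparse local configurations --- long chains of consecutive degree-two vertices, degree-two vertices with too few high-degree neighbours, and similar patterns.

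Against this density lower bound I would set the topological upper bound. Embedding $G$ in the plane and applying Euler's formula $|V| - |E| + |F| = 2$ with every face of length at least $4k$ gives $|E(G)| \leq \frac{2k}{2k-1}\bigl(|V(G)| - 2\bigr)$. A standard weighting then assigns nonnegative charge to every face and concentrates all the negative charge, a fixed negative constant in total, on the vertices of degree two. The core of the argument is a discharging scheme moving charge from the charge-rich large faces to the degree-two vertices precisely along the configurations that criticality forbids, with the aim of making every element end with nonnegative charge --- contradicting the negative total, and hence the existence of $G$.

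The main obstacle is \emph{sharpness}. All presently known density bounds for $C_{2k+1}$-critical (equivalently, bounded maximum-average-degree) graphs are slightly too weak: fed through Euler's formula, the simplest of them yield the conclusion only for girth of order $6k$, and more delicate discharging improves the constant but still does not reach $4k$ for general $k$, whereas the conjecture asserts that the sharp threshold is exactly $4k$. Closing this constant-factor gap is the real difficulty: one needs either a characterisation tight enough to forbid \emph{every} near-extremal configuration that a plane graph of girth $4k$ can carry, or a global discharging argument exploiting planarity beyond what Euler's formula and local reducibility provide. The contributions of this paper --- a complete description of the obstructions for $k \geq 2$, and its use for series-parallel graphs when $k = 2$ --- should be read as supplying exactly this first ingredient in restricted settings; extending such density control to all plane graphs of girth $4k$ is where the approach currently stalls.
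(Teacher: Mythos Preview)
The statement is labelled \texttt{conj:Jaeger-dual} and is presented in the paper purely as a \emph{conjecture}; the paper offers no proof of it, nor does it claim to. You have correctly identified that this is open for every $k \geq 2$, so there is no ``paper's own proof'' to compare against, and your honest admission that the outline stalls is the right response.

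One correction to your framing: you write that the paper establishes ``a complete description of the obstructions for $k \geq 2$'' and suggest this could feed into a density argument for planar $C_{2k+1}$-critical graphs. The paper's structural characterisation (\Cref{thm:main} and \Cref{cor:minimal_2k+1}) applies only to \emph{series-parallel} $C_{2k+1}$-critical graphs, not to general planar ones. The recursive description rests entirely on the serial/parallel decomposition, which is unavailable for arbitrary planar graphs, so it cannot directly supply the density lower bound your sketch requires. Your outline of the discharging approach and its known shortfall (girth roughly $6k$ rather than $4k$) is an accurate summary of the state of the art, but the present paper's contribution is orthogonal to that line of attack rather than a partial input to it.
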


Let $\zeta(2k+1)$ be the minimum girth for which all planar graphs of girth at least $\zeta(2k+1)$ have a $C_{2k+1}$-colouring. 
It is easily shown, for $k \geq 1$, that $4k \leq \zeta(2k+1)$, and the current best general upper bound is by Lov\'asz et al.~\cite{LTWZ13} where they show that $\zeta(2k+1) \leq 6k+1$. For small $k$ there are even better upper bounds.

In the particular case of $k=2$, \Cref{conj:Jaeger-dual} says that $\zeta(5) = 8$. 
While we have $8 \leq \zeta(5)$, Dvo\v{r}\'ak and Postle~\cite{DP17} proved a result that implies $\zeta(5) \leq 10$.

Pan and Zhu found the gap easier to close on for {\em series-parallel graphs}; this well-known subclass of planar graphs is defined in the next section.  
Let $\zeta_{sp}(2k+1)$ be the minimum girth for which all series-parallel graphs of girth at least $\zeta_{sp}(2k+1)$ have a $C_{2k+1}$-colouring. 
Proving the upper bound in~\cite{PanZhu1}, Pan and Zhu then showed in~\cite{PanZhu2}, for odd $g \geq 5$,  that 
\begin{equation}\label{eq:pz}
    \zeta_{sp}(g) =  g + 2\floor{(g-1)/4}.
\end{equation}
They did this by constructing series-parallel graphs of girth $\zeta_{sp}(g)-1$ with no $C_g$-colourings.  For this task Pan and Zhu used what they called the `labelling method'. 
The idea was to construct graphs via the two operations (which we recall below) from which all series-parallel graphs are known to be constructible, and to keep track, with labels, of the possible $C_g$-colourings of the graph.
The results of Pan and Zhu that are mentioned above are more general, dealing not only with $C_g$-colourings, but circular $(p,q)$-colourings.

When deciding if a graph has a property such as $C_{2k+1}$-colourability, it is of course useful to know the minimal set of obstructions to the graph property.  Following~\cite{DP17}, we say a graph is {\em $C_{2k+1}$-critical} if it does not admit a $C_{2k+1}$-colouring but every proper subgraph has a $C_{2k+1}$-colouring.

Using the labelling method and keeping track not only which colourings a graph has as we construct it, but also  which of these colourings are `critical', 
we are able to characterise, and methodically construct without the heavy computational task of checking criticality, the $C_5$-critical series-parallel graphs in \Cref{thm:main}.  
The characterisation is somewhat technical, but yields some nice consequences.  We use it to give Havel- and Steinberg-type strengthenings of Pan and Zhu's Jaeger-type result that series-parallel graphs of odd girth at least $7$ have a $C_5$-colouring.
In \Cref{cor:twoC5s} we get that a series-parallel graph is $C_5$-colourable if it has odd girth at least $5$ and every pair of $C_5$s are distance at least $1$ apart. 
We also get that a series-parallel graph is $C_5$-colourable if it has odd girth at least $5$ and contains neither $C_8$ nor $C_{10}$.  
The point of such results for series-parallel graphs is, of course, as a starting point for finding analogues for planar graphs.

 In \Cref{cor:minimal_2k+1}, we extend  the  technical characterisation, \Cref{thm:main}, of $C_5$-critical series-parallel  graphs to a characterisation of $C_{2k+1}$-critical series-parallel  graphs. We expect this could be useful in getting Havel-type versions of \eqref{eq:pz}.

\begin{problem}\label{P}
For odd $g \geq 7$, is there a constant $d_g$ such that when a graph $G$ has odd girth $\zeta_{sp}(C_{g}) - 2$ and there are no shortest odd cycles that are within distance $d_g$ of each other, then $G$ has a $C_g$-colouring?
\end{problem}

\section{Preliminaries}

In this section we lay out some preliminary facts that will be used in the remaining of the paper. 
One advantage that series-parallel graphs have as a subclass of planar graphs is that they can be defined recursively, as follows.  A {\em $2$-terminal graph} $(G,\us,\vt)$ is a  graph $G$ with two distinguished distinct vertices $\us$ and $\vt$, which we call {\em terminals}.  More succinctly, we sometimes call $G$ an {\em $(\us,\vt)$-terminal graph}, and sometimes omit explicitly mentioning the terminal vertices when there is no confusion.

Let $(G_1, \us_1, \vt_1)$ and $(G_2, \us_2, \vt_2)$ be two $2$-terminal graphs.
The {\em serial sum} $G_1 + G_2$ of $G_1$ and $G_2$ is the $2$-terminal graph $(G_1 + G_2, \us_1, \vt_2)$ obtained by identifying $\vt_1$ and $\us_2$.
 The {\em parallel sum} 
 $G_1\psum G_2$ of $G_1$ and $G_2$ is the $2$-terminal graph $(G_1 \psum G_2, \us,\vt)$ where $\us$ is the vertex obtained by identifying $\us_1$ and $\us_2$ and $\vt$ is the vertex obtained by identifying $\vt_1$ and $\vt_2$.

A graph $G$ is {\em series-parallel} if, for some choice of terminals $\us$ and $\vt$, $(G, \us,\vt)$ can be constructed from edges via the serial sum and parallel sum.  When we talk of the terminals of a series-parallel graph, it is one of these pairs of terminals.

 We will use the ring $\Z_{n}$ of integers modulo $n$ for the vertex set of $C_{n}$ making two vertices $u$ and $v$ adjacent if $|u - v| = 1$; thus a $C_{n}$-colouring of a graph $G$ is a
 mapping $\phi:V(G) \to \Z_{n}$ such that $|\phi(u)-\phi(v)|=1$ for every pair of adjacent vertices $u$ and $v$. 
 
For a nonempty  subset $S \subseteq \Z_{n}$, a $2$-terminal graph $(G,\us,\vt)$ is {\em $S$-forcing} (or $S$ is the {\em forced set} of $(G,\us,\vt)$) {with respect to $C_{n}$}, 
if 
     \[ S = \{ x \in \Z_{n}\mid \exists {\text{ 
     a homomorphism }} \phi: V(G) \to C_n \text{ s.t. } \phi(\us) = 0 \mbox{ and } \phi(\vt) = x\}.\]
For an element $x$ of the forced set of $(G,\us,\vt)$, we say $x$ is a forced element of $G$ or $G$ forces $x$.

When $n$ is odd, these forced sets have an obvious symmetry. 
A subset $S$ of the ring $\Z_{2k+1}  = \{0, \pm1, \dots, \pm k\}$ is {\em symmetric} if it is closed under multiplication by $-1$.  It is clear that every forced set of a $2$-terminal graph with respect to $C_{2k+1}$ is symmetric.
Moreover, it is easy to see how the forced sets of $2$-terminal graphs act under the serial sum and parallel sum.

Extending the sum operations on $2$-terminal graphs to families $\ccF_1$ and $\ccF_2$ of $2$-terminal graphs, let 
    \[ \ccF_1 \setssum \ccF_2 = \{ G_1 + G_2 \mid G_1 \in \ccF_1, G_2 \in \ccF_2 \}, \qquad \mbox{ and }\qquad        \ccF_1 \setpsum \ccF_2 = \{ G_1 \psum G_2 \mid G_1 \in \ccF_1, G_2 \in \ccF_2\}.  \]
At the same time, for two subsets $S_1$ and $S_2$ of $\Z_n$, let
      \[ S_1 + S_2 = \{ x + y \mid x \in S_1, y \in S_2\}. \]

It is clear the following holds by definition. 
\begin{quote}
$(\S)$  If $G_1$ is $S_1$-forcing and $G_2$ is $S_2$-forcing, then $G_1 + G_2$ is $(S_1 + S_2)$-forcing and $G_1 \psum G_2$  is $(S_1 \cap S_2)$-forcing. 
\end{quote}

 A $2$-terminal graph $(G,\us,\vt)$ that is $S$-forcing for a nonempty symmetric subset $S\subseteq\Z_n$ is {\em restricted} if $S\neq \Z_n$, otherwise it is {\em unrestricted}. It is {\em minimally $S$-forcing} if the forced set of $(G,\us,\vt)$ is $S$, but no proper subgraph $(G',\us,\vt)$ of $(G, \us, \vt)$ is $S$-forcing. 

For every $i\in \Z_{2k+1}$, we let $\bfs{i}=\{i,-i\}$ and 
 $\bfsb{i}=\Z_n\setminus\{i,-i\}$. 
 
 \begin{example}\label{ex:sub5}
The nonempty symmetric proper subsets of  $\Z_5$ are the following; their sums are shown in \Cref{table:C5}. 
\[ \begin{array}{lll}\notag
       \Sz= \{ 0 \} & \So =\bfs{4}= \{ -1, 1\} & \St =\bfs{3}= \{-2,2\} \\\notag
       \Szb= \{-2,-1,1,2\} & \Sob=\bfsb{4}= \{-2,0,2\} & \Stb=\bfsb{3}= \{-1,0,1\}.  
   \end{array}\]

\end{example}
\begin{table}%[h!]
\centering
\begin{tabular}{c||c|c|c|c|c|c}
     $+$ & $\Sz$ & $\So$ & $\St$ & $\Szb$ & $\Sob$ & $\Stb$  \\ \hline \hline
     $\Sz$ & $\Sz$ & $\So$ & $\St$ & $\Szb$ & $\Sob$ & $\Stb$  \\ \hline
     $\So$ & $\So$ & $\Sob$ & $\Szb$ & $\Z_5$ & $\Szb$ & $\Z_5$ \\ \hline
     $\St$ & $\St$ & $\Szb$ & $\Stb$ & $\Z_5$ & $\Z_5$ & $\Szb$ \\ \hline
     $\Szb$ & $\Szb$ & $\Z_5$ & $\Z_5$ & $\Z_5$ & $\Z_5$ & $\Z_5$ \\ \hline
     $\Sob$ & $\Sob$ & $\Szb$ & $\Z_5$ & $\Z_5$ & $\Z_5$ & $\Z_5$ \\ \hline
     $\Stb$ & $\Stb$ & $\Z_5$ & $\Szb$ & $\Z_5$ & $\Z_5$ & $\Z_5$ 
\end{tabular}
\vspace{0.3cm}\\
\caption{Sums of symmetric proper  subsets in $\Z_5$.}\label{table:C5} 
\end{table}

Though we will not need it, it is not too hard to show that for each symmetric subset of $\Z_5$ shown in \Cref{ex:sub5}, there is a series-parallel graph for which it is (minimally) forcing. Taking two that force disjoint sets, it is easy to then construct, with a parallel sum, graphs that are $C_5$-critical.  Our goal, however, is to describe the construction of all $C_5$-critical series-parallel graphs.  The following is useful for this.

\medskip

\begin{lemma}\label{lem:minimal_2k+1}
For a {positive integer $k$}  {and} a $C_{2k+1}$-critical series-parallel graph $G$, the following hold:
    \begin{itemize}
    \item[(i)] $G$ is $2$-connected.
    \item[(ii)] If $\{\us,\vt\}$ is a vertex cut of $G$ and  $H$ is a component of $G-\{\us,\vt\}$, then $G[V(H)\cup\{\us,\vt\}]$ is a restricted $(\us,\vt)$-terminal series-parallel graph.
\end{itemize}
\end{lemma}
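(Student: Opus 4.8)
The plan is to prove both parts by the standard criticality technique of gluing together colourings of proper subgraphs, exploiting that $C_{2k+1}$ is vertex-transitive, so that each rotation $x \mapsto x+c$ of $\Z_{2k+1}$ is an automorphism. First note that $G$ is connected, since otherwise a non-colourable component would be a proper subgraph with no $C_{2k+1}$-colouring, contradicting criticality. For (i), suppose some vertex $v$ were a cut vertex, and partition the components of $G-v$ into two nonempty parts to write $G = G_1 \cup G_2$ with $V(G_1)\cap V(G_2) = \{v\}$, where each $G_i$ is a proper subgraph. By criticality each $G_i$ has a $C_{2k+1}$-colouring $\phi_i$; rotating $\phi_2$ by $\phi_1(v)-\phi_2(v)$ makes the two agree on $v$, so their union is a $C_{2k+1}$-colouring of $G$, a contradiction. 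Hence $G$ is $2$-connected.

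For (ii), set $A = G[V(H)\cup\{s,t\}]$ and let $B = G[(V(G)\setminus V(H))\cup\{s,t\}]$, so that $V(A)\cap V(B) = \{s,t\}$ and, since $G-\{s,t\}$ has no edges between distinct components, every edge of $G$ lies in $A$ or in $B$. As a subgraph of the $K_4$-minor-free graph $G$, the graph $A$ is series-parallel. To see that it is a $2$-terminal series-parallel graph with terminals $s$ and $t$, I would verify that the graph $\hat{A}$ obtained from $A$ by adding an edge between $s$ and $t$ is $2$-connected and $K_4$-minor-free: the latter holds because contracting a path from $s$ to $t$ through another component of $G-\{s,t\}$ realises $\hat{A}$ as a minor of $G$, while $2$-connectivity follows from that of $G$, since any part of $A$ cut off by an internal vertex $w$ would avoid both $s$ and $t$ and hence be separated from the rest of $G$ by $w$ alone.

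It remains to show that $A$ is restricted. Its forced set $S$ is symmetric (automatic, as forced sets with respect to $C_{2k+1}$ are symmetric) and nonempty: $A$ is a proper subgraph of $G$, as it omits the nonempty other components of $G-\{s,t\}$, hence it has a colouring, which after a rotation sending $s$ to $0$ witnesses an element of $S$. Suppose for contradiction that $A$ is unrestricted, i.e. $S = \Z_{2k+1}$. Since $B$ is also a proper subgraph, it has a colouring $\psi$; put $a = \psi(s)$ and $b = \psi(t)$. Because $S = \Z_{2k+1}$ there is a homomorphism $\phi$ of $A$ with $\phi(s) = 0$ and $\phi(t) = b-a$, and rotating $\phi$ by $a$ yields a homomorphism of $A$ that agrees with $\psi$ on both $s$ and $t$. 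Gluing this with $\psi$ gives a $C_{2k+1}$-colouring of $G$, contradicting that $G$ has none. Hence $S \neq \Z_{2k+1}$, so $A$ is restricted.

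The conceptual crux, and the only place the criticality hypothesis enters, is this final gluing argument, which is brief once the vertex-transitivity of $C_{2k+1}$ is noted. The step most prone to technical slips is the structural one -- confirming that $\hat{A}$ is $2$-connected so that $(A,s,t)$ really is a $2$-terminal series-parallel graph with respect to the prescribed terminals -- and I expect this to be the main obstacle, though as sketched it reduces cleanly to the $2$-connectivity of $G$ established in part (i).
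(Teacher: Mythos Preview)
Your proof is correct and uses essentially the same gluing-via-vertex-transitivity argument as the paper's. You go further than the paper in justifying the structural assertion that $(A,s,t)$ is a $2$-terminal series-parallel graph with the prescribed terminals (via the $\hat{A}$ minor argument), a point the paper's own proof simply takes for granted.
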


\begin{proof}
 If $G$ is a complete graph $K_m$ for some $m$, then  {since $G$ is  $C_{2k+1}$-critical,} we have that $m \ge 3$ so that (i) holds, and (ii) holds as $G$ has no vertex cut. 
So we may assume that $G$ is not a complete graph.
Let $S$ be a minimum vertex cut of $G$, and let $H$ be a component of $G-S$. 
Let $J_1=G[V(H)\cup S]$ and $J_2=G-V(H)$.
By the minimality of $G$, $J_1$ and $J_2$ have $C_{2k+1}$-colourings $\phi_1$ and $\phi_2$, respectively. 

(i) Towards contradiction, suppose that $S=\{\vt\}$. 
We can permute the colours on $J_1$ so $\phi_1$ and $\phi_2$ agree on $\vt$. 
This gives a $C_{2k+1}$-colouring of $G$, which is a contradiction. 

(ii) Towards contradiction, suppose that $S=\{\us, \vt\}$, but that $(J_1, s,t)$ is unrestricted. 
We can permute the colours on $J_1$ so that $\phi_1$ and $\phi_2$ agree on $\us$ and $\vt$. 
This is a $C_{2k+1}$-colouring of $G$, which is a contradiction. 
\end{proof}

\section{Obstructions to $C_5$-colouring}

In this section, we completely characterize $C_5$-critical series-parallel graphs.
Recall that a $C_5$-critical graph is a graph with no $C_5$-colouring such that every proper subgraph has a $C_5$-colouring.

For a symmetric subset $S$ of $\Z_5$ let $\ccF_S$ be the family of all minimally $S$-forcing series-parallel graphs.  The first step is to characterise the families $\ccF_S$. We do so by proving that $\ccF_S$ is the following recursively defined family $\ccT_S$; the proof is presented in the next subsection.

\begin{definition}\label{def:families:ccF}
The families in $\ssT := \{\Tz, \Tzb, \To, \Tob, \Tt, \Ttb \}$ are the minimal families such that $K_2 \in \To$ and the following hold:
\begin{itemize}
\item[\rm(i)] For every distinct $i,j\in \{0,1,2\}$, if $G\in \ccFTb{i}$ and $H\in \ccFTb{j}$, then $G\psum H\in\ccFT{k}$, where $\{k\}=\{0,1,2\}\setminus\{i,j\}$.
    \item[\rm(ii)] If $G,H\in \ccFT{i}$ for some $i\in\{1,2\}$, then $G+H\in \ccFTb{i}$.
     \item[\rm(iii)] If $G\in \ccFT{i}$ and $H\in \ccFTb{i}$ for some $i\in\{1,2\}$, then $G+H\in \Tzb$.
     \item[\rm(iv)]  If $G \in \ccT_S$ and $H\in \Tz$ for some $\ccT_S \in \ssT$, then $G+H \in \ccT_S$.
\end{itemize} 
\end{definition}

\begin{theorem}\label{lem:families:ccF}
For each $S\in \{  \Sz,\So,\St, \Szb,\Sob,\Stb\}$, $\ccT_S$ is the family of minimally $S$-forcing  {$2$-}terminal series-parallel graphs $(G,\us,\vt)$, that is, $\ccT_S=\ccF_S$.
\end{theorem}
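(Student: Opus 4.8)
The plan is to prove the two inclusions $\ccF_S\subseteq\ccT_S$ and $\ccT_S\subseteq\ccF_S$ separately: the latter says every graph built by the rules is minimally $S$-forcing (soundness), and the former says every minimally $S$-forcing series-parallel graph is built by the rules (completeness). Soundness I would prove by induction on the number of applications of (i)--(iv). That the forced set comes out correctly is immediate from property~$(\S)$ and \Cref{table:C5}: rule (i) realises $\SSb{i}\cap\SSb{j}=\SSs{k}$, rule (ii) realises $\SSs{i}+\SSs{i}=\SSb{i}$, rule (iii) realises $\SSs{i}+\SSb{i}=\Szb$, and rule (iv) realises $S+\Sz=S$; the base case is that $K_2$ forces $\So$.

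The real content of soundness is minimality. Here I would use that passing to a subgraph only enlarges the forced set, so a proper subgraph of a serial sum $A+B$ that still joins the terminals has the form $A'+B'$ with $A'\subseteq A$, $B'\subseteq B$ and at least one inclusion proper (a subgraph omitting the gluing vertex disconnects the terminals and forces $\Z_5$), and similarly for a parallel sum. By the inductive hypothesis a proper operand forces a strictly larger set, so it suffices to check that strictly enlarging either operand moves the combined value off $S$. Two observations make this uniform. First, each $\SSb{i}$ is a maximal proper symmetric subset, its only symmetric superset being $\Z_5$; this settles the parallel rule (i) at once. Second, since $5$ is prime, translation by a nonzero element of $\Z_5$ is fixed-point-free, so no proper nonempty symmetric set $S$ satisfies $S+S'=S$ for a symmetric $S'\supsetneq\Sz$; this settles rule (iv), and rules (ii) and (iii) are then a direct reading of \Cref{table:C5}.

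For completeness I would induct on $|E(G)|$. If $G$ is minimally $S$-forcing with $S$ one of the six proper sets, a short check shows $G$ is connected and that no operand of a top-level decomposition forces $\emptyset$ or $\Z_5$, so every operand forces one of the six sets of \Cref{ex:sub5}. By the recursive structure of series-parallel graphs, $G$ is $K_2$ (giving $S=\So$, $G\in\To$), a serial sum $G=A+B$, or a parallel sum $G=A\psum B$. Minimality of $G$ makes $A$ and $B$ minimally $S_A$- and $S_B$-forcing --- otherwise one could shrink an offending operand without changing the forced set of $G$ --- so by induction $A\in\ccT_{S_A}$ and $B\in\ccT_{S_B}$, and it remains to check that $(S_A,S_B)$ is always a pair admitted by a rule.

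In the parallel case $S=S_A\cap S_B$ with $S_A,S_B\supsetneq S$ strictly, and inspecting \Cref{ex:sub5} shows the only way to write one of the six sets as an intersection of two strictly larger symmetric sets is $\SSb{i}\cap\SSb{j}=\SSs{k}$, which is exactly rule (i) (in particular the coatoms $\Szb,\Sob,\Stb$ admit no such representation, so they never arise from a parallel sum). In the serial case $S=S_A+S_B$, and by \Cref{table:C5} the unordered pairs of proper sets with proper sum are $\{\Sz,X\}$, $\{\So,\So\}$, $\{\St,\St\}$, $\{\So,\Sob\}$, $\{\St,\Stb\}$ --- matching rules (iv), (ii), (ii), (iii), (iii) --- together with the single stray pair $\{\So,\St\}$, which also sums to $\Szb$. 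Excluding this last pair is the step I expect to be the main obstacle. The idea is that \Cref{table:C5} shows $\St$ is not a nontrivial serial sum, so a minimally $\St$-forcing $B$ with at least two edges must itself be a parallel sum, say $B=B_1\psum B_2$ with $B_1$ forcing $\Sob$; then $A+B_1$ is a proper subgraph of $G=A+B$ that already forces $\So+\Sob=\Szb=S$, contradicting minimality. With $\{\So,\St\}$ ruled out, every surviving pair is admitted by a rule, so $G\in\ccT_S$, completing the induction. The delicate points throughout are transferring minimality to the operands across the (non-unique) series-parallel decompositions and this exclusion of spurious serial splittings; the rest is bookkeeping against \Cref{table:C5}.
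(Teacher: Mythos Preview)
Your plan matches the paper's proof almost exactly: the paper also splits into two lemmas, one for $\ccT_S\subseteq\ccF_S$ (your soundness) and one for $\ccF_S\subseteq\ccT_S$ (your completeness), both by induction on $|E(G)|$, and the exclusion of the pair $\{\So,\St\}$ in the serial case is singled out as the only nontrivial step.

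There is, however, a genuine gap in your exclusion argument. You write that since $\St$ is not a nontrivial serial sum, a minimally $\St$-forcing $B$ with at least two edges ``must itself be a parallel sum''. This is false: rule (iv) allows $B\in\Tt$ to be a serial sum $B'+Z$ (or $Z+B'$) with $Z\in\Tz$ and $B'\in\Tt$, and at the series-parallel level this is a perfectly good top-level decomposition with $\{S_{B_1},S_{B_2}\}=\{\Sz,\St\}$. Your sentence about ``nontrivial'' serial sums acknowledges that such decompositions exist but then ignores them. The fix is exactly what the paper does: use the inductive structure of $\Tt$ to write $B$ as $Z_0+(B_1\psum B_2)+Z_1$ (with one or both $Z_i$ possibly absent), where $B_1\in\Tob$ and $B_2\in\Tzb$; then $A+Z_0+B_1+Z_1$ is a proper $(\us,\vt)$-subgraph of $G=A+B$ forcing $\So+\Sz+\Sob+\Sz=\Szb$, contradicting minimality. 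Without peeling off the $\Tz$ layers your contradiction does not fire, so as written the completeness direction is incomplete. Everything else in your outline is correct and follows the paper's argument.
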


Let $\ssF := \{\Fz, \Fzb, \Fo, \Fob, \Ft, \Ftb \}$, and denote the union of all families in $\ssF$ by $\cup\ssF$.  
Observe that for every pair of distinct nonempty symmetric sets $S,T\in\{ \Sz,\So,\St, \Szb,\Sob,\Stb\}$,  if $A\in \FS$, $B\in \FT$, and {$A \psum B \in \cup \ssF$}, then 
\begin{eqnarray}\label{eq:ST}
&&S \cap T \neq T,   
\end{eqnarray}
as otherwise {$A \psum B$} would not be minimal. 

In addition, the recursive conditions (i)$\sim$(iv) in Definition~\ref{def:families:ccF} imply from Theorem~\ref{lem:families:ccF} that the families in $\ssF$ are the minimal families such that $K_2 \in \Fo$, $\ccF_S \supseteq \ccF_S \setssum \Fz$ for every symmetric set $S \in \{\Sz,\So, \St, \Szb, \Sob, \Stb\}$, and the following hold.
     \[ (*) \qquad \quad \begin{array}{lll}\notag
           \Fz \supseteq\Fob \setpsum \Ftb  & \hspace{1cm} & \Fzb \supseteq (\Fo \setssum \Fob)   \cup  (\Ft \setssum \Ftb ) \\ \notag
           \Fo \supseteq \Fzb \setpsum \Ftb && \Fob \supseteq \Fo \setssum \Fo   \\  \notag
          \Ft\supseteq \Fzb \setpsum \Fob   && \Ftb \supseteq \Ft \setssum \Ft
       \end{array}  \]
With this recursive definition of the families in $\cup \ssF$ one can easily compute small members of the families. 
\Cref{fig:smallgraphs} shows all graphs in $\cup \ssF$ with at most 10 vertices; we refer to these as the {\em base} graphs. We point out that $\Fo \setssum \Ft$ is excluded in $(*)$. 
As $\So + \St = \Szb$, one might expect $\Fo \setssum \Ft$ to be included in the identity for $\Fzb$. However, the serial sum of $G \in \Fo$ and $G' \in \Ft$, though $\Szb$-forcing, turns out not to be minimally forcing; one sees, for example, that $H_4 + H_1$ contains $H_3$ as a proper subgraph.

\begin{figure}[h!]
    \centering
    \includegraphics[width=14cm]{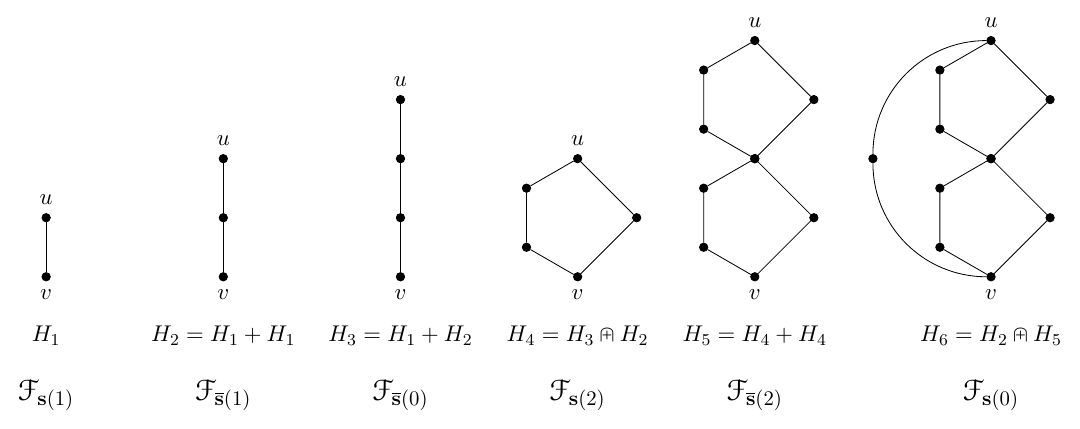}
    \caption{Base graphs -- graphs in $\cup \ssF$ with at most 10 vertices.}
    \label{fig:smallgraphs}
\end{figure}

The following is our main result. 

\begin{theorem}\label{thm:main}
For a series-parallel graph $G$,  $G$ is $C_5$-critical if and only if $G$ belongs to $\ccF_{\SSs{i}} \setpsum \ccF_{\SSb{i}}$ for some $i \in \{0,1,2\}$.
\end{theorem}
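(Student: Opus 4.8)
The plan is to prove both implications, writing $G = A \psum B$ with $A \in \ccFF{i}$ and $B \in \ccFFb{i}$. The backward direction is short. By property~$(\S)$, and since $\SSs{i}$ and $\SSb{i}$ are complementary, $G$ is $(\SSs{i}\cap\SSb{i})$-forcing, i.e.\ $\emptyset$-forcing; because $C_5$ is vertex-transitive, an empty forced set means $G$ has no $C_5$-colouring at all. For criticality it suffices to show that $G-e$ is $C_5$-colourable for every edge $e$, since every proper subgraph is contained in some $G-e$ and colourability is monotone under taking subgraphs. Each edge lies in $A$ or $B$; say $e\in A$. As $A$ is minimally $\SSs{i}$-forcing, the forced set of $A-e$ strictly contains $\SSs{i}$ (forced sets only grow when edges are deleted), hence meets $\SSb{i}$, so by property~$(\S)$ the forced set of $(A-e)\psum B$ is nonempty and $G-e$ is colourable. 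The case $e\in B$ is symmetric.

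For the forward direction, suppose $G$ is $C_5$-critical. By \Cref{lem:minimal_2k+1}(i) it is $2$-connected, hence has no cut vertex; since a nontrivial serial sum always has a cut vertex at its junction, the final operation building $G$ must be a parallel sum, and I would write $G = G_1\psum\cdots\psum G_m$ as the maximal parallel decomposition, where each $G_j$ is $\psum$-indecomposable (either $K_2$ or a serial sum) and $m\ge 2$. Writing $S_j$ for the forced set of $G_j$, property~$(\S)$ gives that $G$ is $\bigcap_j S_j$-forcing, and this equals $\emptyset$. Criticality then yields three facts: each $S_j\subsetneq\Z_5$ (otherwise deleting $G_j$ leaves a smaller empty-forcing subgraph); each $G_j$ is minimally $S_j$-forcing, so $G_j\in\ccF_{S_j}$; and $\bigcap_{j\in R}S_j\neq\emptyset$ for every proper $R\subsetneq\{1,\dots,m\}$, since $\psum_{j\in R}G_j$ must be colourable.

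Passing to complements inside the three orbits $\SSs{0},\SSs{1},\SSs{2}$ of $\Z_5$, the last fact says that the sets $\overline{S_j}:=\Z_5\setminus S_j$ form a \emph{minimal cover} of $\{\SSs{0},\SSs{1},\SSs{2}\}$ by nonempty subsets. A short classification shows the only minimal covers of a three-element set are: (a) a singleton together with its complementary pair; (b) the three singletons; (c) two overlapping pairs. In case (a) one already has $\{S_1,S_2\}=\{\SSs{i},\SSb{i}\}$ for some $i$; in case (b) I would group two components, using $\SSb{i}\cap\SSb{j}=\SSs{k}$ for $\{i,j,k\}=\{0,1,2\}$, to obtain $A$ forcing $\SSs{k}$ and $B$ forcing $\SSb{k}$. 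In either case $G=A\psum B$ with $A$ forcing $\SSs{i}$ and $B$ forcing $\SSb{i}$, and the minimality of $A$ and $B$ (hence $A\in\ccFF{i}$, $B\in\ccFFb{i}$) follows exactly as in the backward direction: for every edge $e\in A$, colourability of $G-e$ forces $A-e$ to force a set strictly larger than $\SSs{i}$, so no proper subgraph of $A$ is $\SSs{i}$-forcing.

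The main obstacle is ruling out case (c): two components $G_1,G_2$ forcing distinct single orbits $\SSs{a},\SSs{b}$ while both excluding a common orbit $\SSs{c}$. Here I would invoke the explicit recursive structure $\ccF_S=\ccT_S$ from \Cref{lem:families:ccF} together with \Cref{table:C5}. The key structural point is that a $\psum$-indecomposable, minimally single-orbit-forcing graph either is a $K_2$ followed by a serial chain of $\SSs{0}$-forcers (which forces $\SSs{1}$ and, after a suitable edge deletion, can be made to keep only $\SSs{0}$ excluded), or contains a genuine parallel atom $R_1\psum R_2$ whose two sides exclude exactly the two orbits of $\overline{S_j}$, so that deleting an edge in one side keeps the corresponding orbit excluded. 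Using this, for at least one $j\in\{1,2\}$ one can delete an edge of $G_j$ so that the forced set of $G_j-e$ still excludes the orbit forced by the other component; then $G-e$ has empty forced set, contradicting criticality. This case analysis, driven by \Cref{table:C5}, is where I expect the real work to lie.
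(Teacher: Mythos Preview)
Your proposal is essentially correct and rests on the same core idea as the paper's proof. The backward direction is identical. For the forward direction, both you and the paper reduce to the problematic configuration where $G=G_1\psum G_2$ with each $G_j$ forcing a single orbit $\SSs{a},\SSs{b}$ ($a\neq b$), and both resolve it by invoking the recursive structure from \Cref{lem:families:ccF}: the component not in $\Fo$ decomposes as $Z_0+(G_3\psum G_4)+Z_1$ with $G_3\in\ccFFb{i}$, $G_4\in\ccFFb{j}$, and since one of $i,j$ equals the other component's orbit, one obtains a proper non-colourable subgraph (equivalently, an edge whose deletion leaves the forced set empty), contradicting criticality.

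The organisational difference is that the paper simply takes an arbitrary $2$-cut and writes $G=G_1\psum G_2$, so the dichotomy is just ``both $S_1,S_2\in\{\SSs{0},\SSs{1},\SSs{2}\}$ or not''. Your maximal parallel decomposition and minimal-cover classification introduces an extra case~(b) (three components forcing $\SSb{0},\SSb{1},\SSb{2}$), which the paper never sees because its coarser split automatically groups two of them into a single $\SSs{k}$-forcing side. Your extra bookkeeping is harmless but unnecessary. One small caution: your parenthetical claim that a $K_2$-chain ``can be made to keep only $\SSs{0}$ excluded'' fails when $G_j=K_2$ itself (deleting its only edge gives forced set $\Z_5$); however this does not damage the argument, since in that sub-case the other component forces $\SSs{0}$ or $\SSs{2}$ and therefore contains the needed parallel atom---which is precisely why the paper simply assumes $G_1\notin\Fo$ without loss of generality and works there.
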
 
By Theorems~\ref{lem:families:ccF} and~\ref{thm:main}, we can construct all $C_5$-critical graphs. 
In addition, we can show the following.
 
\begin{theorem}\label{cor:twoC5s}
Let $G$ be a series-parallel graph with odd girth at least $5$. Then $G$ is $C_5$-colourable if $G$ satisfies one of the following:
\begin{itemize}
    \item[(i)] $G$ has no two $5$-cycles sharing a vertex, or 
    \item[(ii)] $G$ has neither $8$-cycles nor $10$-cycles. 
\end{itemize}
\end{theorem}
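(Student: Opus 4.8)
The plan is to prove the contrapositive of both parts at once, using the structure theorem to reduce to a single critical subgraph. Suppose $G$ is a series-parallel graph of odd girth at least $5$ with no $C_5$-colouring. Since series-parallel graphs are $K_4$-minor-free and hence closed under taking subgraphs, and $G$ has no $C_5$-colouring, $G$ contains a subgraph-minimal $G'$ with no $C_5$-colouring; by definition $G'$ is $C_5$-critical and series-parallel. As $G'\subseteq G$, every odd cycle of $G'$ is an odd cycle of $G$, so $G'$ has odd girth at least $5$. By \Cref{thm:main}, $G' = A \psum B$ for some $i\in\{0,1,2\}$ with $A\in\ccF_{\SSs{i}}$ and $B\in\ccF_{\SSb{i}}$, so that $A$ and $B$ force the complementary sets $\SSs{i}$ and $\SSb{i}$. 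Because a series-parallel graph of odd girth at least $7$ is $C_5$-colourable (the result of Pan and Zhu quoted in the introduction) and $G'$ is not, $G'$ has odd girth exactly $5$ and in particular contains a $C_5$. It therefore suffices to show that every such $G'$ both (i) contains two $5$-cycles sharing a vertex and (ii) contains an $8$-cycle or a $10$-cycle; since $G'\subseteq G$, either conclusion contradicts the corresponding hypothesis on $G$, proving the theorem.

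Both claims I would prove by structural induction along the recursive description of the families in \Cref{lem:families:ccF} (equivalently the identities $(*)$), restricted to members of odd girth at least $5$, with the base graphs of \Cref{fig:smallgraphs} as base cases. For each $2$-terminal graph $(H,\us,\vt)$ arising in the families I would carry three pieces of data: the length of a shortest $\us$--$\vt$ path, the attainable short $\us$--$\vt$ path lengths relevant to girth (of both parities), and whether $H$ already contains two $5$-cycles sharing a vertex (respectively a $C_8$ or a $C_{10}$). The sum operations act transparently on these data: a serial sum concatenates $\us$--$\vt$ paths and hence adds lengths, a serial sum with a graph of $\Fz$ merely lengthens all $\us$--$\vt$ paths, and a parallel sum $A\psum B$ introduces, for every $\us$--$\vt$ path $P$ of $A$ and $Q$ of $B$, a cross-cycle of length $|P|+|Q|$, and these are the only new cycles. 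The odd-girth hypothesis restricts which length combinations may occur, which makes the bookkeeping finite.

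For part (i) the mechanism I would exploit is that the odd-girth-$5$ gadget forcing the set $\Stb=\{0,\pm1\}$ is intrinsically a pair of pentagons sharing a vertex: by $(*)$ the only way to obtain a member of $\Ftb$ is via rule (ii) with $i=2$, a serial sum of two $\Ft$-graphs padded by $\Fz$-graphs, and each odd-girth-$5$ member of $\Ft$ contains a $C_5$ through a terminal; the serial sum then makes two such pentagons meet at the identified terminal. I would show by induction that any odd-girth-$5$ member of a family forcing a restricted set inherits such a configuration, so that one of $A,B$ already contains two $5$-cycles through a common vertex, and in the degenerate cases a pentagon of $A$ or $B$ together with a cross-cycle of $A\psum B$ produces two $5$-cycles through a common vertex. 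For part (ii) I would argue by length count: since $\SSs{i}\cap\SSb{i}=\emptyset$, the parallel sum contributes cross-cycles of every length $|P|+|Q|$, some of which are even, and the complementary forcing together with odd girth exactly $5$ pins the relevant shortest $\us$--$\vt$ path lengths of $A$ and $B$ into a narrow range; tracking these lengths through the recursion shows the even cross-cycle lengths must include $8$ or $10$.

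The main obstacle is the length analysis in part (ii): I must rule out the a priori possibility that, while keeping odd girth at least $5$ and complementary forcing, every even cross-cycle has length $6$ or length at least $12$, thereby missing both $8$ and $10$. This forces me to control not merely the single shortest $\us$--$\vt$ path length of each family but the full spectrum of attainable short path lengths of both parities, and to verify through each of the four recursive rules that the combined spectrum always realises length $8$ or $10$. The part-(i) induction is conceptually lighter but still requires care to show the pentagon overlap is genuinely unavoidable across all six families and, in particular, that the serial-sum-with-$\Fz$ rule (which separates terminals and could a priori pull pentagons apart) cannot destroy it, rather than checking only the smallest base graphs.
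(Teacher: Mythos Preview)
Your reduction to a $C_5$-critical subgraph $G' = A \psum B$ with $A \in \ccF_{\SSs{i}}$ and $B \in \ccF_{\SSb{i}}$ via \Cref{thm:main} is exactly the paper's first step. The difference is in the finishing induction. You propose tracking path-length spectra and pentagon-incidence data through the recursion, and you correctly flag the obstacle for (ii): controlling enough of the $\us$--$\vt$ path-length spectrum across all six families to guarantee an even cross-cycle of length $8$ or $10$. The paper sidesteps this entirely by proving a lemma (\Cref{lem:F}) phrased purely in terms of vertex count: every graph in $\cup\ssF$ on more than five vertices contains two $5$-cycles sharing a vertex, and every graph in $\cup\ssF$ on more than nine vertices, with the single exception $H_4 + H_5$, contains a $C_8$ or a $C_{10}$. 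The induction is then almost immediate --- if either factor of a serial or parallel sum is large, apply the inductive hypothesis to it; otherwise both factors lie among the base graphs $H_1,\dots,H_5$, and the few combinations permitted by \Cref{def:families:ccF} are checked by inspection. No odd-girth hypothesis is used in the lemma; it enters only at the very last step of the theorem's proof, to discard the one surviving small critical graph $H_1 \psum H_2 = K_3$. Your route may well go through with enough bookkeeping, but the paper's vertex-count shortcut is exactly what eliminates the obstacle you identified.
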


\subsection{Proof of Theorem~\ref{lem:families:ccF}} 

As all graphs in $\ccT_S$ in \Cref{def:families:ccF} are constructed recursively from $K_2$ with the serial sum or parallel sum,  they are series-parallel graphs with designated terminal vertices.  For simplicity, let $\cup\ssT$ be the union of all families in $\ssT$.  
      
\begin{lemma}\label{lem:ccF-claim1}
Every graph in $\ccT_S$ is minimally $S$-forcing, for all $S \in \{ \bfs{0}, \bfs{1}, \bfs{2}, \bfsb{0}, \bfsb{1}, \bfsb{2}\}$.
\end{lemma}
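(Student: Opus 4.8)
The plan is to induct on the number of vertices, equivalently on the recursive construction in \Cref{def:families:ccF}, proving simultaneously that each $G\in\ccT_S$ has forced set exactly $S$ and that $G$ is minimal. The base case is $K_2\in\To$: with $\us$ adjacent to $\vt$, any homomorphism to $C_5$ with $\phi(\us)=0$ sends $\vt$ to a neighbour of $0$, so the forced set is $\{1,-1\}=\So$, and deleting the single edge leaves forced set $\Z_5\neq\So$; thus $K_2$ is minimally $\So$-forcing. For the inductive step, $G$ arises from strictly smaller graphs via one of the operations (i)--(iv). That $G$ is $S$-forcing is immediate from property~$(\S)$ together with \Cref{table:C5}: for the serial cases (ii)--(iv) one reads off $\So+\So=\Sob$, $\St+\St=\Stb$, $\So+\Sob=\St+\Stb=\Szb$, and $S+\Sz=S$; for the parallel case (i) one uses that $\{\Sz,\So,\St\}$ partitions $\Z_5$, so $\SSb{i}\cap\SSb{j}=\SSs{k}$ when $\{k\}=\{0,1,2\}\setminus\{i,j\}$.

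The substance is minimality. I would first record two reductions. Since any homomorphism of $G$ restricts to one of a subgraph $(G',\us,\vt)$ with the same terminals, the forced set of $G'$ always \emph{contains} that of $G$; and since every graph here is connected (being built from $K_2$), every proper subgraph is contained in $G-e$ for some edge $e$. Hence it suffices to show that deleting \emph{any single edge} strictly enlarges the forced set. I would also record the containment lattice of symmetric subsets of $\Z_5$: the only symmetric set properly containing $\SSb{i}$ is $\Z_5$, while the symmetric sets properly containing $\So$ (resp.\ $\St$) are $\Stb,\Szb,\Z_5$ (resp.\ $\Sob,\Szb,\Z_5$), and those properly containing $\Sz$ are $\Stb,\Sob,\Z_5$.

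Now fix an edge $e$; it lies in one constituent, and by the induction hypothesis (minimality of that constituent) deleting $e$ replaces its forced set $S_i$ by a symmetric superset $S_i'\supsetneq S_i$, necessarily one of those just listed. It then remains to check, against \Cref{table:C5}, that recombining $S_i'$ with the forced set of the untouched constituent (via $+$ for serial sums, via $\cap$ for the parallel sum) yields a set strictly larger than $S$. The parallel case is instant: $S_i'=\Z_5$ gives $\Z_5\cap\SSb{j}=\SSb{j}\supsetneq\SSs{k}$, strict because $\SSb{j}=\SSs{k}\cup\SSs{i}$. For the serial cases one verifies that every admissible superset, summed with its partner set, overshoots the target $S$ to a strictly larger symmetric set (landing on $\Szb$ or $\Z_5$ in (ii) and (iii), and likewise strictly enlarging $S$ in (iv)).

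The main obstacle is precisely this strictness verification: enlarging one constituent's forced set does \emph{not} a priori enlarge the combined set, so the argument hinges on the arithmetic of \Cref{table:C5} confining the admissible supersets so tightly that the sum or intersection is forced to overshoot $S$. Once this finite check is complete, edge-minimality upgrades to full minimality via the two reductions above, and the induction closes, giving $\ccT_S\subseteq\ccF_S$.
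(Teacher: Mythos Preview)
Your proposal is correct and follows essentially the same route as the paper: induction on the recursive construction, with $S$-forcing read off from $(\S)$ and \Cref{table:C5}, and minimality checked by deleting a single edge from one constituent and verifying via the table that the recombined forced set strictly enlarges. The paper organises the finite check by first splitting on whether the enlarged constituent set equals $\Z_5$ (which dispatches the parallel case, since constituents there lie in some $\ccFTb{i}$), whereas you first record the containment lattice and then split by rule type; these are cosmetic differences. One minor imprecision: in cases (ii) and (iii) the recombined set is always $\Z_5$, not ``$\Szb$ or $\Z_5$'', but this does not affect the argument.
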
 
      
\begin{proof} 
Our proof is by induction on the number of edges of $G \in \cup\ssT$. It is clearly true when $G = K_2$. Take any graph $G \in \ccT_X$ for some proper symmetric subset $X$ of $\Z_5$.
By \Cref{def:families:ccF},  we may assume that either $G=G_1+G_2$ or $G=G_1\psum G_2$ where for $i\in\{1,2\}$ we may assume by the induction hypothesis that $G_i$ is a minimally $S_i$-forcing graph, so $G_i$ is in $\ccF_{S_i}$. 
By ($\S$), Table~\ref{table:C5}, and \Cref{def:families:ccF},   
it follows that $(G,\us,\vt)$ is $X$-forcing. What has to be shown is that it is minimally $X$-forcing, that is, every proper subgraph {$(H,\us,\vt)$} of {$(G,\us,\vt)$} is not $X$-forcing.

Let $e$ be any edge in $G$, and let $G'=G- e$.
We will show that $(G',\us,\vt)$ is not $X$-forcing.
Without loss of generality, we may assume that $e$ is an edge of $G_1$. 
As $G_1$ is minimally $S_1$-forcing, we have that $G_1': = G_1 - e$ is $S_1'$-forcing for some proper superset $S_1'$ of $S_1$.
If $S_1'=\Z_5$, then $S_1'+S_2=\Z_5$ and $S_1'\cap S_2=S_2$, and so either $G'$ is unrestricted or $G'$ is $S_2$-forcing, which means that $G'$ is not $X$-forcing
(since in this case, $X = S_1 \cap S_2 \neq S_2$ by \eqref{eq:ST}).
Suppose that $S_1'\subsetneq \Z_5$. 
By cardinality, $S_1=\bfs{i}$ for some $i\in \{0,1,2\}$. 
Referring to~\Cref{def:families:ccF}, this precludes the possibility that $G = G_1 \psum G_2$, so  $G=G_1+G_2$ and $X=S_1+S_2$. 
If $S_1=\Sz$, then $S_1' = \Sob$ or $S_1' = \Stb$. 
In either case, $S_1'+S_2$ is a proper superset of $X=S_1+S_2=S_2$, and  $G'$ is not $X$-forcing. 
So we may assume that $S_1\in\{\So, \St\}$. 
If $S_2= \Sz$, then $S_1'+S_2=S_1'$ is a proper superset of $X=S_1+S_2=S_1$, and $G'$ is not $X$-forcing.
Thus, by~\Cref{def:families:ccF}, $S_2 = \bfs{i}$ or $S_2= \bfsb{i}$.
Without loss of generality, we may let $i=1$  so that $S_1 = \bfs{1}$ and $S_2 = \bfs{1}$ or $S_2 = \bfsb{1}$, the former giving that $S_1'=\Szb$ or $S_1'=\Stb$. In any case, $S_1'+S_2=\Z_5$. Thus $G'$ is unrestricted and so it is not $X$-forcing.
\end{proof}

Note that as the forcing set $S$ of a series-parallel graph $G$ is unique, saying it is in $\cup \ssT$ means that it is in $\ccT_S$.

\begin{lemma}\label{lem:ccF-claim2} 
Every minimally $S$-forcing series-parallel graph is in $\cup \ssT$.  
\end{lemma}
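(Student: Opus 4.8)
Lemma~\ref{lem:ccF-claim1} gives one inclusion: $\cup\ssT \subseteq \cup\ssF$, since every graph in $\ccT_S$ is minimally $S$-forcing. Lemma~\ref{lem:ccF-claim2} is the reverse inclusion, and together they yield $\ccT_S = \ccF_S$ (Theorem~\ref{lem:families:ccF}). So the plan is to prove that every minimally $S$-forcing series-parallel graph $(G,\us,\vt)$ belongs to $\cup\ssT$, by induction on the number of edges of $G$. The base case is $G = K_2$, which is $\So$-forcing (the two terminals get colours $0$ and $\pm 1$), and indeed $K_2 \in \To$ by definition.

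**The induction step.**
For the inductive step I would take a minimally $S$-forcing series-parallel $(G,\us,\vt)$ with more than one edge. Since $G$ is series-parallel, $(G,\us,\vt)$ decomposes as either a serial sum $G = G_1 + G_2$ or a parallel sum $G = G_1 \psum G_2$ of smaller $2$-terminal series-parallel graphs. The key point is that each $G_i$ is itself forcing for some symmetric set $S_i$, and I would first argue that each $G_i$ may be taken to be \emph{minimally} $S_i$-forcing: if some $G_i$ is not minimal, replace it by a minimal subgraph that is still $S_i$-forcing; this only shrinks $G$, and one must check the resulting sum is still $S$-forcing and still minimal, contradicting nothing or completing the reduction. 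By the induction hypothesis each such minimal $G_i$ lies in $\ccT_{S_i} = \cup\ssT$. Then by property $(\S)$, $S = S_1 + S_2$ (serial) or $S = S_1 \cap S_2$ (parallel), and consulting Table~\ref{table:C5} I would enumerate which pairs $(S_1, S_2)$ produce a \emph{proper} symmetric set $S$ via a \emph{minimally} forcing sum. The task is to verify that every such surviving pair is exactly one of the productions (i)--(iv) in Definition~\ref{def:families:ccF}, so that $G = G_1 \star G_2 \in \ccT_S$.

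**Handling minimality and the excluded case.**
The delicate bookkeeping is minimality. Many pairs $(S_1,S_2)$ give $S_1 + S_2 = \Z_5$ (unrestricted, hence excluded from consideration), and among the pairs giving a proper $S$ one must discard those whose sum fails to be minimal. The paper already flags the critical instance: $\Fo \setssum \Ft$ gives $\So + \St = \Szb$, but such a serial sum is never minimally forcing because it properly contains a smaller $\Szb$-forcing graph (the remark after $(*)$ notes $H_4 + H_1 \supsetneq H_3$). So I would show, using \eqref{eq:ST} and a direct inspection of the table, that the pair $(\So,\St)$—and its symmetric variants—cannot arise from a minimal $G$, which is precisely why production (ii) restricts to $G,H \in \ccFT{i}$ for a \emph{single} $i$. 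Concretely, when $G = G_1 + G_2$ with $G_1 \in \ccF_{\So}$ and $G_2 \in \ccF_{\St}$, I would exhibit inside $G$ a proper $\Szb$-forcing subgraph, contradicting minimality.

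**The main obstacle.**
The hard part will be the minimality verification in the serial-sum case, rather than the arithmetic of which sets sum to which. For the parallel sum, minimality is comparatively clean because $S_1 \cap S_2$ shrinks each factor's contribution and \eqref{eq:ST} rules out the degenerate containments directly. For the serial sum, however, $S_1 + S_2 \supseteq S_i$ can hold with equality or strict containment, and one genuinely has to rule out ``shortcut'' subgraphs—like the $H_4 + H_1 \supsetneq H_3$ phenomenon—that produce the same forced set with fewer edges. I would organize this by a short case analysis over the six symmetric sets, each time either matching the pair to a production in Definition~\ref{def:families:ccF} or producing an explicit smaller forcing subgraph to contradict minimality; the $\bfs{0}$-absorption clause (iv) handles the remaining ``padding'' cases where one factor forces $\{0\}$.
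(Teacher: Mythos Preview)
Your proposal is correct and follows essentially the same route as the paper: induction on edges, decompose $G$ as a serial or parallel sum, deduce minimality of the factors from minimality of $G$, apply the induction hypothesis to place $G_i \in \ccT_{S_i}$, and then match the surviving $(S_1,S_2)$ pairs to clauses (i)--(iv) of Definition~\ref{def:families:ccF}. The paper carries out exactly the case analysis you outline, and for the one genuinely nontrivial exclusion $(\So,\St)$ it makes your ``exhibit a proper $\Szb$-forcing subgraph'' step explicit by unpacking $G_2 \in \Tt$ via the recursion as $Z_0 + (G_3 \psum G_4) + Z_1$ with $G_3 \in \Tzb$, $G_4 \in \Tob$, and observing that $G_1 + Z_0 + G_4 + Z_1 \in \To \setssum \Tob \subseteq \Tzb$ is a proper subgraph with the same terminals.
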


\begin{proof} 
Again the proof is by induction on the number of  edges of the minimally forcing series-parallel graph $G$. Note that by the minimality of $G$, $G$ is connected.
The statement that $G$ is in $\cup \ssT$ is true when  $|E(G)| = 1$, as then $G$ is $K_2$. 
Let $G$ be a minimally forcing series-parallel graph with at least two edges,  and with forced set $X$. Note that $X\neq\emptyset$ and $X \neq \Z_5$.  
As $G$ is not $K_2$, $G$ is the serial or  parallel sum of series-parallel graphs $G_1$ and $G_2$, which have forced sets $S_1$ and $S_2$, respectively. 
If $S_1=\Z_5$, then $X = S_1+S_2=\Z_5$ or $X = S_1\cap S_2=S_2$, which implies that $G$ is unrestricted or $G$ is not minimally $X$-forcing.
Thus both $S_1$ and $S_2$ are proper subsets of $\Z_5$ and $G_1$ and $G_2$ are restricted.
From the minimality of $G$,  $G_1$ and $G_2$ are minimally $S_1$-forcing and minimally $S_2$-forcing, respectively.
By the induction hypothesis, $G_1,G_2\in \cup \ssT$, and so $G_1 \in \ccT_{S_1}$ and $G_2 \in \ccT_{S_2}$. 

Suppose that $G=G_1\psum G_2$. Then $X=S_1\cap S_2$. If $S_1\subset S_2$ or $S_2\subset S_1$, then $G$ would not be minimal,  so $S_1$ and $S_2$ must both be in $\{\Szb, \Sob, \Stb\}$. Referring to \Cref{def:families:ccF}, we thus get that $G$ is in $\cup \ssT$, as needed.

Suppose that $G=G_1+G_2$. Then $X=S_1+S_2$.
Again using \Cref{def:families:ccF}, if $S_1$ or $S_2$ is $\Sz$, then $G\in \cup \ssT$.
Suppose that $S_1,S_2\neq \Sz$. 
If either $S_1 \subsetneq S_2$ or $S_1, 
S_2 \in \{\Szb, \Sob, \Stb\}$, then $X = S_1+S_2=\Z_5$ (by \cref{table:C5}), which is a contradiction. 
Thus either $S_1$ and $S_2$ are incomparable or $S_1=S_2=\bfs{i}$ for some $i\in\{1,2\}$. That is, $S_1=\bfs{i}$ and $S_2 \in \{\bfs{i}, \bfs{3-i}, \bfsb{i}\}$ for some $i \in \{1,2\}$. 
Without loss of generality, we may let $i=1$.
Referring to \Cref{def:families:ccF}, $G$ is in $\cup \ssT$, as needed, unless we have $G_1 \in \To$ and $G_2 \in \Tt$.   
We show that this cannot happen, by showing that such $G = G_1 + G_2$, which would be $\Szb$-forcing,  is not minimally forcing.  
Indeed, by induction, $G_2 \in \Tt$ can be written as  
\[ Z_0 + G_2' + Z_1 \ \text{ or } \ Z_0+G_2' \ \text{ or } \ G_2' + Z_1 \ \text{ or } \ G_2'\]
for  {$Z_0, Z_1 \in \Tz $} and $G_2' \in \Tzb \setpsum \Tob$, so $G_2$ can be written as  
\[ Z_0 + (G_3 \psum G_4) + Z_1 \ \text{ or } \ Z_0+(G_3 \psum G_4)\  \text{ or } \ (G_3 \psum G_4) + Z_1 \ \text{ or } \ G_3 \psum G_4 \]
for {$G_3 \in \Tzb$ and $G_4 \in \Tob$}.  
But then $G = G_1 + G_2$ contains the proper subgraph $G'$, which can be written as 
{\[G_1 + (Z_0 + G_4+Z_1) \ \text{ or }\ G_1 + (Z_0 + G_4)\ \text{ or }\ G_1 + (G_4+Z_1)\ \text{ or }\ G_1 +G_4\]}
In each case, $G'$ is in $\To \setssum \Tob \subseteq \Tzb$ with the same terminals 
as $G$.
This contradicts the fact that $G$ is a minimally forcing series-parallel graph.
\end{proof} 
These two lemmas yield the theorem. 

\subsection{Proof of~\Cref{thm:main} and~\Cref{cor:twoC5s}}
 
Now we can prove the main theorem. 

\begin{proof}[Proof of~\Cref{thm:main}]
Take a $C_5$-critical series-parallel graph $G$.
If $G$ is a complete graph, then since $G$ is a series-parallel graph that is not $C_5$-colourable, $G = K_3$.
We know that $K_3 =P_2 \psum P_3$ where $P_2 \in \Fo$ and $P_3 \in \Fob$.
(Here, $P_i$ denotes the path graph on $i$ vertices.)
Thus, from now on, we may assume that $G$ is not a complete graph.
By~\Cref{lem:minimal_2k+1} (i), $G$ is 2-connected and so $G$ has a vertex cut $\{\us,\vt\}$.
So $G=G_1\psum G_2$ for some $(\us,\vt)$-terminal series-parallel graphs $G_1$ and $G_2$. By the criticality of $G$, $G_1$ and $G_2$ are $C_5$-colourable. Let $S_1$ and $S_2$ be the forced sets of  {$G_1$ and $G_2$}, respectively. 
Note that $S_1,S_2\neq\emptyset$.
By~\Cref{lem:minimal_2k+1} (ii), $G_1$ and $G_2$ are restricted, and so $S_1,S_2$ are proper subsets of $\Z_5$.
Since $G$ is not $C_5$-colourable, 
$$S_1\cap S_2=\emptyset.$$ 
Moreover, by the criticality of $G$,  $G_1$ and $G_2$ are minimally $S_1$-forcing and minimally $S_2$-forcing, respectively.
That is, $G_1\in \ccF_{S_1}$ and $G_2\in \ccF_{S_2}$.
Towards a contradiction to the statement of the theorem, suppose that $S_1,S_2 \in \{\bfs{0},\bfs{1},\bfs{2}\}$.
We may assume that $G_1\not\in \Fo$. Since $G_1\neq K_2$, from~\Cref{def:families:ccF}, it follows that $G_1$ can be written as
\[Z_0+(G_3\psum G_4)+Z_1\ \text{ or }\ Z_0+(G_3\psum G_4)\ \text{ or }\ (G_3\psum G_4)+Z_1\  \text{ or }\ G_3\psum G_4\]
for  {$Z_0,Z_1\in \Fz$}, $ {G_3}\in \ccF_{\bfsb{i}}$, and $ {G_4}\in \ccF_{\bfsb{j}}$,  where $i\neq j$  and $i, j \in \{ 0,1,2\}$.  
Since $S_2$ is disjoint from one of $\bfsb{i}$ and $\bfsb{j}$,  
either the following graphs are not $C_5$-colourable
\[(Z_0+G_3+Z_1)\psum G_2, (Z_0+G_3)\psum G_2, (G_3+Z_1)\psum G_2, G_3\psum G_2\]
or the following graphs are not $C_5$-colourable,
\[(Z_0+G_4+Z_1)\psum G_2, (Z_0+G_4)\psum G_2, (G_4+Z_1)\psum G_2, G_4\psum G_2,\]
which is a contradiction to the criticality of $G$.
The assumption that $S_1,S_2\in \{\Sz,\So,\St\}$ is therefore not true, and so $G$ is in $\ccF_{\mathbf{s}(i)} \setpsum \ccF_{\bar{\mathbf{s}}(i)}$ for some $i\in\Z_5$.

Conversely, suppose that $G$ is a graph in the family $\ccFF{i} \setpsum \ccFFb{i}$ for some $i \in \Z_5$.
Clearly, $G$ is not $C_5$-colourable by definition.
Since $G$ is the parallel sum of two graphs in $\cup \ssF$, let $G=G_1\psum G_2$, where $S_1$ and $S_2$ are the forced sets of  $G_1$ and $G_2$, respectively.  
Take any edge $e$ of $G$, and without loss of generality assume $e$ belongs to $G_1$. By~\Cref{lem:families:ccF},  {$(G_1,\us,\vt)$} is minimally $S_1$-forcing and so  {$(G_1- e,\us,\vt)$} is $S_1'$-forcing for some proper superset $S_1'$ of $S_1$. 
This implies $S_1'\cap S_2 \neq \emptyset$ by the cardinality of $S_1$ and $S_2$. Thus $G - e$ is $C_5$-colourable. This completes the proof.
\end{proof}

\begin{lemma}\label{lem:F}
For a $2$-terminal series-parallel graph $G=(G,\us,\vt)$, the following hold:
\begin{itemize}
\item[(i)] If $G$ is in $\cup \ssF$ and has more than five vertices, then $G$ has two $5$-cycles sharing a vertex. 
\item[(ii)] If $G$ is in $\cup \ssF$ and has more than nine vertices and $G\neq H_4 + H_5$  {(recall the base graphs $H_i$'s)}, then $G$ has an $8$-cycle or a $10$-cycle.
\end{itemize}  
\end{lemma}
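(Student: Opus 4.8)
The plan is to establish both parts by a single structural induction on $|V(G)|$ that follows the recursive description $(*)$ of $\cup\ssF$, after first pinning down the small members of $\cup\ssF$. Using the identities in $(*)$ (equivalently, \Cref{def:families:ccF}) together with \Cref{table:C5} and the minimality constraint \eqref{eq:ST} to discard every unrestricted or non-minimal sum, I would show that the members of $\cup\ssF$ on at most ten vertices are exactly the base graphs $K_2\in\Fo$, $P_3\in\Fob$, $P_4\in\Fzb$, $C_5\in\Ft$, $C_5+C_5\in\Ftb$, and $P_3\psum(C_5+C_5)\in\Fz$, on $2,3,4,5,9$ and $10$ vertices respectively. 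In particular there is no member on $6$, $7$, or $8$ vertices, the only member on $9$ vertices is $C_5+C_5$, and the only member on $10$ vertices is $P_3\psum(C_5+C_5)$. This reduces to routine bookkeeping: a serial sum adds $|V|-1$ vertices and a parallel sum adds $|V|-2$, so the minimum size in each family is forced, while \Cref{table:C5} rules out every sum whose forced set is all of $\Z_5$. Throughout I will use the trivial fact that each summand is a subgraph of the sum, so any cycle in a summand persists.

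For part (i) the induction is then immediate. Members on at most five vertices satisfy the statement vacuously. For $|V(G)|>5$ write $G=G_1\psum G_2$ or $G=G_1+G_2$ with $G_1,G_2\in\cup\ssF$ smaller. If one summand has more than five vertices, the induction hypothesis gives two $5$-cycles sharing a vertex there, and these persist in $G$. Otherwise both summands lie in $\{K_2,P_3,P_4,C_5\}$, and checking the sums allowed by $(*)$ shows that the only one producing a member of $\cup\ssF$ on more than five vertices is the serial sum $C_5+C_5\in\Ftb$; this graph is precisely two $5$-cycles glued at the identified terminal, so the statement holds.

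Part (ii) uses the same induction but needs one extra ingredient, a path-length flexibility: in each $C_5$ the two terminals are joined both by a path of length $2$ and by a path of length $3$, so a serial chain of $m$ copies of $C_5$ joins its terminals by a path of every length in $\{2m,\dots,3m\}$, while $P_3$ and $P_4$ contribute terminal paths of length $2$ and $3$. The base case is the unique member on ten vertices, $P_3\psum(C_5+C_5)$: pairing the length-$2$ path of $P_3$ with the length-$6$ path through the two $5$-cycles yields an $8$-cycle. For the step with $|V(G)|\ge 11$, a serial sum $G=G_1+G_2$ has a cut vertex, so every cycle lies inside one summand; the only serial sum of two members each on at most nine vertices that lands in $\cup\ssF$ on more than nine vertices is the excluded graph $H_4+H_5$ (the chain $C_5+C_5+C_5$), and in every other serial sum some summand has more than nine vertices, which, unless it equals $H_4+H_5$, carries an $8$- or $10$-cycle by induction. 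For a parallel sum $G=G_1\psum G_2$, if a summand has more than nine vertices and differs from $H_4+H_5$ we finish by induction; the remaining graphs are $P_4\psum(C_5+C_5)$, $(H_4+H_5)\psum P_3$, and $(H_4+H_5)\psum(C_5+C_5)$, for which I exhibit the cycle by combining one terminal path from each side, of total length $3+5=8$, $6+2=8$, and $6+4=10$ respectively.

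The crux, and the only place the explicit exception enters, is the graph $H_4+H_5=C_5+C_5+C_5$: it has more than nine vertices yet contains only $5$-cycles, so as a summand it cannot be handled by induction. For serial sums this could in principle propagate the exception, but $(*)$ never puts a $\Fzb$-graph as the right summand of a serial sum and puts it as a left summand only in the rule $\Fzb\supseteq\Fzb+\Fz$; hence its partner lies in $\Fz$, whose smallest member $P_3\psum(C_5+C_5)$ already contains an $8$-cycle, so the exception is harmless there. For parallel sums no induction is available on the $H_4+H_5$ side, so the $8$- or $10$-cycle must be produced by hand, and the key point is that the adjustable chain lengths $\{6,7,8,9\}$, together with the small partner's fixed terminal path length, can always be completed to $8$ or $10$. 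Verifying this last arithmetic for the finitely many boundary graphs is the main obstacle, but it is entirely mechanical.
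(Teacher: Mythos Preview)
Your proof is correct and follows essentially the same approach as the paper: both argue by induction on $|V(G)|$ after enumerating the base graphs $H_1,\dots,H_6$ (which you identify explicitly as $K_2$, $P_3$, $P_4$, $C_5$, $C_5+C_5$, and $P_3\psum(C_5+C_5)$), and both handle the exception $H_4+H_5$ by observing that in any sum within $\cup\ssF$ its partner must lie in $\Fz$ (serial case) or $\Fob\cup\Ftb$ (parallel case), whence the required $8$- or $10$-cycle is obtained either by induction or by direct inspection of finitely many boundary graphs. Where the paper appeals to figures for the boundary cases $H_2\psum H_5$, $H_3\psum H_5$, $H_2\psum(H_4+H_5)$, and $H_5\psum(H_4+H_5)$, you supply the explicit path-length arithmetic, but the logical structure is the same.
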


\begin{proof}
(i) We proceed by induction on the number of vertices. The smallest graph in $\cup \ssF$ on at least 6 vertices  is the base graph $H_5$,
and it has two $C_5$ sharing one vertex.
Suppose that (i) is true for all graphs in $\cup \ssF$ on at most $n-1$ vertices for some $n\ge7$. 
Let $G$ be a graph in $\cup \ssF$ on $n$ vertices.
By~\Cref{def:families:ccF}, $G$ is the serial  or  parallel sum of two graphs $G_1$ and $G_2$ in $\cup \ssF$. If $|V(G_i)|\ge 6$, then $G_i$ has two $C_5$ sharing a vertex by the induction hypothesis.
Thus, each $G_i$ has at most five vertices, and so it is one of the base graphs $H_1$, $H_2$, $H_3$, $H_4$.
Thus, $|V(G)|\le 9$. 
Since $7 \le |V(G)|\le 9$, $G = H_5$. Thus, $G$ has two $5$-cycles sharing a vertex.
  
(ii) Again we use induction on the number of vertices. The only graph in $\cup \ssF$ with 10 vertices {is $H_6$, and it contains} an $8$-cycle. 
Suppose that (ii) is true for all graphs in $\cup \ssF$ on at most $n-1$ vertices for some $n\ge 11$. 
Take a graph $G$ in $\cup \ssF$ on $n$ vertices. By~\Cref{def:families:ccF}, $G$ is the serial  or  parallel sum of graphs $G_1$ and $G_2$ {in $\cup \ssF$}. 

Suppose that $G_1=H_4+H_5 \in \Fzb$. 
If $G=G_1 + G_2$, then by~\Cref{def:families:ccF}, $G_2 \in \Fz$ so that $|V(G_2)| \ge 10$ and $G_2 \neq H_4 + H_5$.
By the induction hypothesis, $G_2$ has an $8$-cycle or a $10$-cycle, which implies that $G$ has an $8$-cycle or a $10$-cycle. 
If $G= G_1 \psum G_2$, then by~\Cref{def:families:ccF}, $G_2\in \Fob \cup \Ftb$ so that $G_2\neq H_4+H_5$. 
If $|V(G_2)|\ge 10$, then we are done by the induction hypothesis.
If $|V(G_2)|\le 9$, then $G=H_2 \psum (H_4+H_5 )$
or $G=H_5 \psum (H_4+H_5)$ so that $G$ contains an $8$-cycle or a $10$-cycle. 
See~\Cref{fig:c8c10} for illustrations.
  
\begin{figure}[h!]
    \centering
    \includegraphics[width=10cm]{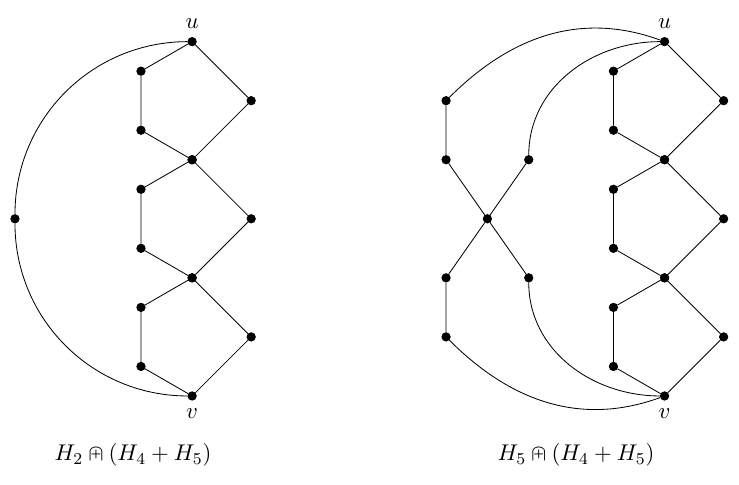}
    \caption{Illustrations for $H_2 \psum (H_4+H_5)$ and $H_5 \psum (H_4 + H_5)$.}
    \label{fig:c8c10}
\end{figure}
\begin{figure}[h!]
    \centering
    \includegraphics[width=8cm]{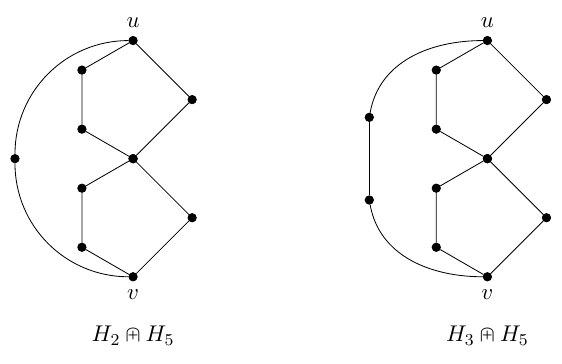}
    \caption{Illustrations for $H_2 \psum H_5$ and $H_3 \psum H_5$}
    \label{fig:c8c10_2}
\end{figure}

{Suppose that $G_i \neq H_4 + H_5$ for all $i \in [2]$.}
By the induction hypothesis, we may assume that each $G_i$ has at most nine vertices, and so it is one of the base graphs $H_1$, $H_2$, $H_3$, $H_4$, $H_5$.
Since  each of $H_1$, $H_2$, $H_3$, $H_4$ has at most five vertices, 
$G_1$ or $G_2$ is $H_5\in \Ftb$.
By~\Cref{def:families:ccF} we get that $G$ is one of $H_4 + H_5$, $H_2 \psum H_5$, $H_3 \psum H_5$.
Since $G\neq H_4 + H_5$, $G$ is 
$H_2 \psum H_5$ or $H_3 \psum H_5$.
See~\Cref{fig:c8c10_2} for illustrations.  
Thus $G$ has an $8$-cycle or a $10$-cycle.
\end{proof}

\begin{proof}[Proof of~\Cref{cor:twoC5s}]
For each statement, we let $G$ be a minimum counterexample. 
So $G$ is a $C_5$-critical series-parallel graph. By~\Cref{thm:main}, $G$ is a graph in the family $\ccF_{\mathbf{s}(i)} \setpsum \ccF_{\bar{\mathbf{s}}(i)}$ for some $i\in\{0,1,2\}$.

For statement (i), we assume that $G$ has no two $5$-cycles sharing a vertex.
By~\Cref{lem:F}~(i), every graph in $\cup \ssF$  with more than five vertices has two $5$-cycles sharing a vertex. 
So $G$ is a parallel sum of two graphs among the four base graphs $H_1$, $H_2$, $H_3$, $H_4$. Since $G$ is in $\ccF_{\bfs{i}} \setpsum \ccF_{\bfsb{i}}$  for some $i\in\{0,1,2\}$,  $G$ is  $H_1 \psum H_2$. Then $G=K_3$, which is a contradiction.

For statement (ii), we assume that $G$ has neither $C_8$ nor $C_{10}$. 
By~\Cref{lem:F} (ii), we conclude that either $G$ has at most nine vertices or $G=H_4+H_5$.
Clearly, $G\neq H_4+H_5$, since $H_4+H_5$ is $C_5$-colourable.
Thus $|V(G)|\le 9$.
So $G$ is in $\Fo \setpsum \Fob$, and so we have $G=H_1\psum H_2$,  which means $G = K_3$, a contradiction.
\end{proof}

\section{Generalisation to $(2k+1)$-cycles for $k \geq 2$}

In this section, we prove an extension of our main technical theorem,  \Cref{thm:main}, to larger odd cycles.   This requires more definitions. Throughout this section, let $k \ge 2$ be an integer.
Refining the set $\ccF_S$ of minimally $S$-forcing series-parallel graphs, we make the following definition.  

\begin{definition}\label{def:forcingfam}
For $S\subseteq \Z_{2k+1}$, let $\bar{S}=\Z_{2k+1} \setminus S$.
For disjoint nonempty symmetric subsets $S,T \subseteq \Z_{2k+1}$, let $\ccF_S^T$ be the family of minimally $S$-forcing $(\us,\vt)$-terminal series-parallel graphs $(G,\us,\vt)$ such that
for every $e \in E(G)$ there is an element $x \in T$ and a $C_{2k+1}$-colouring $\phi$ of $G - e$ such that $\phi(\us) = 0$ and $\phi(\vt) = x$. 
\end{definition}

Note that if $T\subseteq T'$, then $\ccF_S^{T}\subseteq  \ccF_S^{T'}$, and so $\ccF_S: = \ccF_S^{\bar{S}}$ is exactly the family of minimally $S$-forcing series-parallel graphs.

Our main task in this section is to prove the following theorem. 

\begin{theorem}\label{cor:minimal_2k+1} 
A series-parallel graph $G$ is $C_{2k+1}$-critical if and only if $G$ belongs to $\ccF_S^A \setpsum \ccF_T^B$ for some nonempty symmetric subsets $S,T,A,B$ of $\Z_{2k+1}$ such that  $S\cap T = \emptyset$, $A\subseteq T$, and $B\subseteq S$.
\end{theorem}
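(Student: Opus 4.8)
The plan is to run the same argument as in the proof of \Cref{thm:main}, using the refined families $\ccF_S^A$ of \Cref{def:forcingfam} to record the extra information needed to certify criticality one edge at a time. The underlying homomorphism facts are all instances of property $(\S)$, and the forced sets that appear are automatically symmetric.

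For the forward implication, I would first dispose of the complete-graph case: a series-parallel graph contains no $K_4$, so the only complete graph that is $C_{2k+1}$-critical for $k\ge 2$ is $K_3$ (the triangle-free $C_{2k+1}$ admits no homomorphism from $K_3$), and $K_3 = P_2 \psum P_3$, where $P_2$ forces $\{1,-1\}$ and $P_3$ forces $\{0,2,-2\}$; these are disjoint for $k\ge 2$, which places $K_3$ in the asserted form. For non-complete $G$, \Cref{lem:minimal_2k+1}(i) gives $2$-connectivity, so $G$ decomposes at its terminals as a parallel sum $G = G_1 \psum G_2$, with forced sets $S$ and $T$ that are proper by \Cref{lem:minimal_2k+1}(ii). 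Since $G$ is not colourable, property $(\S)$ yields $S \cap T = \emptyset$, and criticality forces each $G_i$ to be \emph{minimally} forcing: a proper $S$-forcing subgraph $G_1'$ of $G_1$ would make $G_1' \psum G_2$ a proper, non-colourable subgraph of $G$.

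The one genuinely new step is choosing the superscripts. I would set $A = \bigcup_{e \in E(G_1)} \big((\text{forced set of } G_1 - e)\cap T\big)$ and define $B$ symmetrically, with the roles of $G_1,T$ and $G_2,S$ exchanged. Criticality makes every $G - e = (G_1 - e)\psum G_2$ colourable, so each intersection is nonempty by $(\S)$; hence $A$ is a nonempty symmetric subset of $T$, and by construction every edge-deletion $G_1 - e$ realises some element of $A$ at $\vt$ --- precisely the requirement $G_1 \in \ccF_S^A$. (Well-definedness of $\ccF_S^A$, which needs $S$ and $A$ disjoint, is free since $A \subseteq T$ and $S\cap T = \emptyset$.) Symmetrically $G_2 \in \ccF_T^B$ with $B \subseteq S$, which gives the decomposition $G \in \ccF_S^A \setpsum \ccF_T^B$ with the required constraints.

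For the converse, non-colourability of $G = G_1 \psum G_2$ is immediate from $S \cap T = \emptyset$ together with the vertex-transitivity of $C_{2k+1}$ (which promotes the absence of a colouring fixing $\us$ to the absence of any colouring). Criticality then reduces to colouring each $G - e$: for $e \in E(G_1)$, the definition of $\ccF_S^A$ supplies $x \in A \subseteq T$ realised in $G_1 - e$, and since $T$ is the forced set of $G_2$, property $(\S)$ puts $x$ in the forced set of $(G_1 - e) \psum G_2 = G - e$; the case $e \in E(G_2)$ is symmetric via $B \subseteq S$. I expect the main obstacle to be organisational rather than conceptual: confirming that the top-level parallel decomposition can always be taken with both parts \emph{minimally} forcing, and checking that the constructed superscript sets meet the disjointness and containment conditions built into \Cref{def:forcingfam}.
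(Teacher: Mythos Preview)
Your proposal is correct and follows essentially the same route as the paper's proof: decompose the $2$-connected $G$ as a parallel sum $G_1 \psum G_2$, use criticality to obtain disjoint nonempty forced sets $S,T$ and minimality of each factor, then verify the superscript conditions edge by edge for both directions. The paper streamlines your construction of the superscripts by simply taking $A = T$ and $B = S$ (since $\ccF_S^{A} \subseteq \ccF_S^{T}$ whenever $A \subseteq T$, your finer union is unnecessary), and it does not single out the complete-graph case, as the general parallel-sum argument already covers $K_3 = K_2 \psum P_3$.
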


\Cref{cor:minimal_2k+1} can be viewed as a generalisation of \Cref{thm:main}.
Indeed, before we prove \Cref{cor:minimal_2k+1}
 we show how \Cref{thm:main} follows from it as a corollary.

 We first observe that some of the families $\ccF_S^T$ are empty for cardinality reasons.

\begin{proposition}\label{prop}
   The family $\ccF_\bfs{i}^\bfs{j}$ is empty unless $i = 1$, in which case it contains only $K_2$ and serial sums of $K_2$ and elements of $\ccF_\bfs{0}$. 
\end{proposition}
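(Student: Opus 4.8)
The goal is to show that $\ccF_{\bfs{i}}^{\bfs{j}}$ is empty unless $i=1$, and to pin down its members in that case. The plan is to exploit the cardinality arithmetic of the forced sets, since $\bfs{i} = \{i,-i\}$ has size $2$ when $i \neq 0$ and size $1$ when $i = 0$. First I would recall that a graph $G \in \ccF_{\bfs{i}}^{\bfs{j}}$ is in particular minimally $\bfs{i}$-forcing, so by \Cref{lem:families:ccF}'s analogue for general $k$ (or directly from the recursive structure) it is built from $K_2$ via serial and parallel sums. Since $\bfs{i}$ must be nonempty and symmetric, and we are told $\bfs{i}$ and $\bfs{j}$ are disjoint, we have $i \neq j$; the immediate consequence I would extract is that the forced set $\bfs{i}$ has cardinality at most $2$, which is a severe restriction.

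\textbf{Key steps.} I would analyse the last sum operation producing $G$. If $G = K_2$, its forced set is $\bfs{1}$, giving $i = 1$ as claimed, and $K_2 - e$ is the empty edge whose only colouring forces all of $\Z_{2k+1}$, so the condition in \Cref{def:forcingfam} is satisfiable for any target $j$. For $G \neq K_2$, write $G = G_1 \star G_2$ with $\star \in \{+, \psum\}$ and forced sets $S_1, S_2$. In the parallel case $\bfs{i} = S_1 \cap S_2$; for this to be a symmetric set of size $\leq 2$ that is \emph{minimally} forced, neither $S_1$ nor $S_2$ can contain the other, and I would argue that the only way an intersection lands on a size-$2$ set $\{i,-i\}$ without $G$ failing minimality forces $i = 1$ via the sum table's structure for $\Z_{2k+1}$. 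In the serial case $\bfs{i} = S_1 + S_2$, and here I would use the fact that $0 \in \bfs{i}$ forces $i = 0$, i.e.\ $\bfs{i} = \{0\}$; but a serial sum of two proper forced sets each of size $\geq 1$ has size at least $\max(|S_1|,|S_2|)$ and is symmetric, and $\{0\}$ can only arise from $S_1 + S_2 = \{0\}$, which over $\Z_{2k+1}$ (for $k \geq 2$) forces both summands to be $\{0\}$ — this shows $\bfs{0}$ is achievable by serial sums but that the target set $\bfs{j}$ for the deleted-edge colouring cannot be realised unless $j$ is compatible, contradicting $i = 0 \neq 1$. The surviving case is $\bfs{1} = \So$, arising either as $K_2$ or as a serial sum $K_2 + (\text{something in } \ccF_{\bfs{0}})$, since adding $\Sz$-forcing pieces (which force $\{0\}$) under serial sum preserves the forced set by property (iv)/$(\S)$.

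\textbf{Assembling the characterisation.} To conclude the ``in which case'' clause, I would show that every element of $\ccF_{\bfs{1}}^{\bfs{j}}$ is either $K_2$ or a serial sum of $K_2$ with elements of $\ccF_{\bfs{0}}$, and conversely that each such graph indeed lies in the family. The forward direction uses that any minimally $\So$-forcing graph whose last operation is a parallel sum would need $S_1 \cap S_2 = \So$ with incomparable $S_1, S_2$, which I expect to rule out on cardinality/symmetry grounds for $k \geq 2$, leaving only serial sums; decomposing the serial structure and using that each serial summand must force a set whose serial contribution keeps the total at $\So$ isolates the $\Sz$-forcing pieces. The reverse direction is a direct check via $(\S)$ that $K_2 + H$ with $H$ forcing $\{0\}$ is $\So$-forcing, plus verifying the deleted-edge condition of \Cref{def:forcingfam}.

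\textbf{Main obstacle.} The hard part will be the parallel-sum elimination: showing rigorously that no nontrivial parallel sum can be minimally $\bfs{i}$-forcing for a size-$2$ set $\bfs{i}$ with $i \neq 1$. This requires understanding which pairs of proper symmetric subsets of $\Z_{2k+1}$ intersect in exactly $\{i,-i\}$ while remaining incomparable, and ruling out $i = 2, \dots, k$ — the analogue of reading off \Cref{table:C5} but now for general odd $n$, where the sum/intersection structure is no longer a small finite table. I would handle this by arguing directly about the arithmetic of symmetric sets rather than enumerating, focusing on why only $i=1$ (equivalently, the generating edge $K_2$) survives both the intersection constraint and the minimality constraint.
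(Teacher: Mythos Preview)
Your plan has a genuine gap: you are trying to derive the conclusion from the \emph{subscript} $\bfs{i}$ alone, but the constraint that actually does the work is the \emph{superscript} $\bfs{j}$, which you barely use. Concretely, your ``main obstacle'' is misstated: you say you must show that no nontrivial parallel sum can be minimally $\bfs{i}$-forcing for $i\neq 1$, but this is simply false. For every $i$ the family $\ccF_{\bfs{i}}$ is nonempty; already in $\Z_5$ one has $\Szb\cap\Sob=\St$, so a parallel sum of graphs from $\Fzb$ and $\Fob$ lies in $\Ft$. Thus nothing about ``an intersection landing on a size-$2$ set $\{i,-i\}$'' forces $i=1$.

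The paper's argument hinges instead on the deleted-edge condition with target $\bfs{j}$. If $G=G_1\psum G_2\in\ccF_{\bfs{i}}^{\bfs{j}}$ with forced sets $S,T$, then removing an edge of $G_1$ must produce a new forced element lying in $T\setminus S$ (so that it survives the intersection with $T$) \emph{and} in $\bfs{j}$; symmetrically, $\bfs{j}$ must also meet $S\setminus T$. Since $S\setminus T$ and $T\setminus S$ are disjoint \emph{symmetric} sets and $\bfs{j}=\{\pm j\}$ is itself symmetric of size at most $2$, $\bfs{j}$ cannot meet both. This rules out parallel sums for \emph{all} $i$, not just $i\neq 1$. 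Your serial-case analysis is also off: you claim ``$0\in\bfs{i}$ forces $i=0$'', but $S_1+S_2=\bfs{i}$ does not force $0\in\bfs{i}$ (e.g.\ $\bfs{0}+\bfs{1}=\bfs{1}$). The correct observation is that $|\bfs{i}|\le 2$ together with $|S_1+S_2|\ge\max(|S_1|,|S_2|)$ forces $S_1,S_2\in\{\bfs{0},\bfs{1},\dots,\bfs{k}\}$, and then $\bfs{a}+\bfs{b}=\bfs{a+b}\cup\bfs{a-b}$ has size $\le 2$ only when one of $a,b$ is $0$; hence $\{S_1,S_2\}=\{\bfs{0},\bfs{i}\}$. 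From there a descent on the $\bfs{i}$-forcing factor (which inherits membership in $\ccF_{\bfs{i}}^{\bfs{j}}$ because the other factor forces $\{0\}$) leads to a smallest example, which must be $K_2$, giving $i=1$.
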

\begin{proof}
    Observe that no family $\ccF_{\bfs{i}}^{\bfs{j}}$ can contain the parallel sum $(G,\us,\vt) =  G_1 \psum G_2$. Indeed, let $S$ and $T$
    be the forced sets of $G_1$ and $G_2$, respectively. 
    Both properly contain the set $S \cap T$ that is forced by $G$.
    Now, for every edge $e$ in {$G_1$}, $G_1 - e$ admits a $C_5$-colouring $\phi$ with $\phi(\us) = 0$ and $\phi(\vt)$ being in $T \setminus S$. So for $G$ to be in $\ccF_{\bfs{i}}^{\bfs{j}}$, $\bfs{j}$ must contain some element of $T \setminus S$. Similarly, it must also contain some element of $S \setminus T$.  But these are symmetric sets, and $\bfs{j} = \{\pm j\}$, so this is impossible. 

    It is therefore sufficient to prove the proposition when $G \in \ccF_\bfs{i}^\bfs{j}$ is $K_2$ or the serial sum $G_1 + G_2$.
    In the latter, letting $S$ and $T$ be the forced sets of $G_1$ and $G_2$, respectively, the fact that $S + T = \bfs{i}$ clearly implies that $\{ S,T \} = \{ \bfs{0}, \bfs{i} \}$, so we may assume, without loss of generality, that $S = \bfs{0}$ and $T = \bfs{i}$.  

    In the case that $i \neq 1$, we assume towards contradiction, that the serial sum $G = (G_1,\us_1,\vt_1) + (G_2,\us_2,\vt_2)$ is a smallest graph in $\ccF_{\bfs{i}}^{\bfs{j}}$.    
    For every edge $e$ of $G_2$ we have that $G - e$ has a $C_5$-colouring $\phi$ with $\phi(\us_1) = \phi(\us_2) = 0$ and {$\phi(\vt_2) \in \bfs{j}$}.  So $G_2$ is in $\ccF_{\bfs{i}}^{\bfs{j}}$, contradicting the minimality of $G$. 

    The proof in the case that $i = 1$ is the same, only we assume that $G = G_1 + G_2$ is a smallest graph in $\ccF_{\bfs{i}}^{\bfs{j}}$ that is not the {serial} sum of $K_2$ and a graph from $\ccF_\bfs{0}$. 
\end{proof}

Now \Cref{thm:main} follows from \Cref{cor:minimal_2k+1} by  Proposition~\ref{prop}. Indeed, by \Cref{cor:minimal_2k+1}, a series-parallel $C_5$-critical graph $G$ is in $\ccF_S^A \psum \ccF_T^B$ where $S,T,A$, and $B$ are symmetric subsets of $\Z_5$ with $S \cap T = \emptyset, A \subset T$, and $B \subset S$. 
As $S$ and $T$ are disjoint, we may assume that $S = \bfs{i}$ for some $i$, and so its nonempty subset $B$ is also $\bfs{i}$. As $T$ is disjoint from $S = \bfs{i}$, it is either $\bfsb{i}$ or $\bfs{j}$ for some $j \neq i$. If it is $\bfs{j}$, then $A \subset T$ is also $\bfs{j}$, and by \Cref{prop}, one of $\ccF_S^A$ and $\ccF_T^B$ is empty.  So $T = \bfsb{i}$, giving \Cref{thm:main}.

With this, the original proof of \Cref{thm:main} is perhaps unnecessary. However, \Cref{thm:main} was presented first to highlight the simplicity of the statement regarding $C_5$-colourings, and its proof, hopefully, gives the reader intuition for the following more complicated proof of \Cref{cor:minimal_2k+1}.

\begin{definition}      
For two nonempty subsets $S$ and $Q$ of $\Z_{2k+1}$, let $Q_S:= \{ x \mid (x+S) \cap Q \neq \emptyset\}$ be the set of elements $x$ such that $x + s$ is in $Q$ for some $s \in S$.  
\end{definition}

For example,  in $C_7$,  $(\So\cup \Sth)_{\So} = \Sob$ and $(\Sz\cup\St)_{\So} = \So\cup\Sth$.  
Note that $Q_S$ is symmetric if $Q$ and $S$ are symmetric.

\begin{fact}\label{fact:Qs}
For two subsets $S$ and $Q$ of $\Z_{2k+1}$, we have $ 
\overline{Q_S} \subseteq \overline{Q}_S$; indeed
\[\begin{array}{ll}
 x \in \overline{Q_S} & \Longrightarrow  x \not\in Q_S \\
    &\Longrightarrow x+s \not\in Q \text{ for every }s\in S\\
    &\Longrightarrow x+s \in \overline{Q} \text{ for every }s\in S\\ 
    &\Longrightarrow x+s \in \overline{Q} \text{ for some }s\in S\\
    &\Longrightarrow x \in \overline{Q}_S.
\end{array}\]
\end{fact}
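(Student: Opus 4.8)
The plan is to prove the inclusion directly, by taking an arbitrary $x \in \overline{Q_S}$ and chasing it through the definitions until it lands in $\overline{Q}_S$. Since both sides are subsets of $\Z_{2k+1}$, establishing the implication $x \in \overline{Q_S} \Rightarrow x \in \overline{Q}_S$ for every $x$ is exactly the claimed containment. The key observation that organises the whole argument is that, once the definitions are unwound, $\overline{Q_S}$ is governed by a \emph{universal} quantifier over $s \in S$ while $\overline{Q}_S$ is governed by an \emph{existential} one; the inclusion is then nothing more than ``$\forall$ implies $\exists$'' over a nonempty index set.

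First I would unwind the left-hand side. By definition of complement, $x \in \overline{Q_S}$ means $x \notin Q_S$. By the definition of $Q_S$, membership $x \in Q_S$ asserts $(x+S)\cap Q \neq \emptyset$, i.e.\ $x + s \in Q$ for some $s \in S$; negating this, $x \notin Q_S$ says precisely that $x + s \notin Q$ for \emph{every} $s \in S$, which rephrased with the complement is $x + s \in \overline{Q}$ for every $s \in S$. Next I would reach the right-hand side: applying the same definition to $\overline{Q}$ in place of $Q$, the set $\overline{Q}_S$ consists of those $y$ with $y + s \in \overline{Q}$ for \emph{some} $s \in S$. Since we have just shown $x + s \in \overline{Q}$ for every $s \in S$, and $S$ is nonempty, we certainly have $x + s \in \overline{Q}$ for some $s \in S$, so $x \in \overline{Q}_S$ and the chain closes.

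The only step requiring any care — and the one I would flag explicitly — is the passage from ``for every $s \in S$'' to ``for some $s \in S$'', which silently uses that $S$ is nonempty; this is exactly the standing hypothesis under which $Q_S$ and $\overline{Q}_S$ are defined in the excerpt. Beyond this there is no real obstacle, and indeed no more can be expected: the reverse containment $\overline{Q}_S \subseteq \overline{Q_S}$ fails in general, since ``$x+s \in \overline{Q}$ for some $s$'' does not force ``$x+s' \in \overline{Q}$ for all $s'$'' (take any $x$ admitting both some $s$ with $x+s \in \overline{Q}$ and some $s'$ with $x+s' \in Q$), which is precisely why only the one inclusion is asserted.
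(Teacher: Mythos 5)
Your proof is correct and is essentially identical to the paper's own argument, which consists of exactly the same chain of definitional unwindings culminating in the weakening of the universal quantifier over $s \in S$ to an existential one. Your explicit flagging that this last step needs $S \neq \emptyset$ is a fair point the paper leaves implicit (it is covered by the standing nonemptiness hypothesis in the definition of $Q_S$), but it does not change the substance.
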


The following is an extension/refinement of \Cref{lem:families:ccF}. 

\begin{lemma}\label{lem:tech}
 For disjoint nonempty symmetric subsets $Q, R \subseteq \Z_{2k+1}$, the graphs in $\ccF_{R}^{Q}$
are those that are constructed by putting $K_2$ into $\ccF_{\bfs{1}}^X$ for every symmetric subset $X \subseteq \bfsb{1}$ 
and defining families recursively by the following constructions.
\begin{enumerate}
    \item[(1)] For every symmetric sets $S$ and $T$ such that $R= S \cap T$, $Q \cap (T \setminus S) \neq \emptyset$, and $Q \cap (S \setminus T) \neq \emptyset$, if $G_1 \in \ccF_{S}^{Q \cap (T \setminus S)}$ and $G_2 \in \ccF_{T}^{Q \cap (S \setminus T)}$, then $G_1 \psum G_2 \in \ccF_R^Q$.
    \item[(2)] For every symmetric sets $S$ and $T$ such that $R= S + T$, $Q_T \neq \emptyset$, and $Q_S \neq \emptyset$, if $G_1 \in \ccF_{S}^{Q_T}$ and $G_2 \in \ccF_{T}^{Q_S}$, then $G_1 + G_2 \in \ccF_R^Q$.
\end{enumerate}
\end{lemma}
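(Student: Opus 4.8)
The plan is to mirror the two-part argument already used to prove \Cref{lem:families:ccF}, i.e.\ \Cref{lem:ccF-claim1} and \Cref{lem:ccF-claim2}, now carrying the extra superscript bookkeeping. Writing $\mathcal{G}_R^Q$ for the family built from the stated base case and closed under rules (1) and (2), I would prove $\mathcal{G}_R^Q=\ccF_R^Q$ by two inductions on the number of edges: first the \emph{soundness} inclusion $\mathcal{G}_R^Q\subseteq\ccF_R^Q$ (every constructed graph really is minimally $R$-forcing and satisfies the $Q$-condition), and then the \emph{completeness} inclusion $\ccF_R^Q\subseteq\mathcal{G}_R^Q$ (every such graph arises from the construction). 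Throughout I would use that $R$ and $Q$ are disjoint symmetric sets, and that $(\S)$ records how forced sets behave under the two sums.

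For soundness the base case is immediate: $K_2$ is $\bfs{1}$-forcing, and deleting its single edge frees $\vt$ so that every $x$ in a nonempty symmetric $X\subseteq\bfsb{1}$ is forced, whence $K_2\in\ccF_{\bfs{1}}^X$. For rule (1) with $R=S\cap T$, $(\S)$ gives that $G_1\psum G_2$ is $R$-forcing; deleting an edge $e\in E(G_1)$ leaves $G_1-e$ forcing some $S'\supsetneq S$ that, by the superscript hypothesis on $G_1$, meets $Q\cap(T\setminus S)$, and since $R\cap Q=\emptyset$ this new element lies in $(S'\cap T)\setminus R$, showing simultaneously that $(G_1-e)\psum G_2$ is not $R$-forcing (minimality) and that it forces an element of $Q$ (the $Q$-condition). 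Rule (2) with $R=S+T$ is analogous: $(\S)$ gives $R$-forcing, and deleting $e\in E(G_1)$ leaves $G_1-e$ forcing some $y\in Q_T$, so $y+t\in Q$ for some $t\in T$; then $(G_1-e)+G_2$ forces $y+t$, which lies in $Q$ and hence, by $R\cap Q=\emptyset$, outside $R$, again giving both minimality and the $Q$-condition. The symmetric roles of $G_1$ and $G_2$ close this direction.

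For completeness I take $G\in\ccF_R^Q$ with at least two edges; as in \Cref{lem:ccF-claim2}, $G$ is connected and is a serial or parallel sum of restricted pieces $G_1,G_2$ with forced sets $S,T$, and the minimality of $G$ forces each $G_i$ to be minimally forcing, so by the induction hypothesis it suffices to pin down the correct superscript for each piece. In the parallel case $R=S\cap T$: for each edge $e\in E(G_1)$ the graph $G-e$ forces some $q\in Q$; writing $S'$ for the forced set of $G_1-e$ we have $q\in S'\cap T$, and since $q\in Q$ is disjoint from $R=S\cap T$ we get $q\in Q\cap(T\setminus S)$, so $G_1\in\ccF_S^{Q\cap(T\setminus S)}$ and symmetrically $G_2\in\ccF_T^{Q\cap(S\setminus T)}$, with both superscripts nonempty — exactly rule (1). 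In the serial case $R=S+T$: the element $q\in Q$ forced by $G-e$ writes as $y+t$ with $y$ forced by $G_1-e$ and $t\in T$, whence $y\in Q_T$ and $G_1\in\ccF_S^{Q_T}$; here $R\cap Q=\emptyset$ also yields $S\cap Q_T=\emptyset$ (as $x\in S$ gives $x+T\subseteq R$), so the family is well defined, and symmetrically $G_2\in\ccF_T^{Q_S}$ — exactly rule (2).

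The step I expect to be the main obstacle is the serial case of completeness, where in the unrefined $C_5$ setting (\Cref{lem:ccF-claim2}) one had to exclude by hand the non-minimal sums such as those in $\Fo\setssum\Ft$. The refined framework should dispose of these automatically: for such a decomposition the induced superscripts $Q_S,Q_T$ turn out to be empty (equivalently, the corresponding refined family is empty), so rule (2) never produces a non-minimal graph, and no genuine member of $\ccF_R^Q$ decomposes in the forbidden way. Making this precise — showing that nonemptiness of $Q_S$ and $Q_T$ corresponds exactly to minimality of the serial sum — is where I expect the real work to lie, and it is here that \Cref{fact:Qs}, relating $\overline{Q_S}$ to $\overline{Q}_S$, together with the disjointness of $R$ and $Q$, is needed to control precisely which elements are newly forced upon each edge deletion.
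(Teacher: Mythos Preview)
Your proposal is correct and follows essentially the same approach as the paper: the paper organises the argument as two claims (one for parallel sums, one for serial sums), each proving both the forward and backward inclusions, which is just a transposition of your soundness/completeness split. The content of the arguments is identical.

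Your final paragraph, however, anticipates an obstacle that does not exist. Your own soundness argument for rule~(2) already establishes minimality of $G_1+G_2$: the newly forced element $y+t$ lies in $Q$, hence outside $R$ by the standing hypothesis $R\cap Q=\emptyset$, so $G-e$ is not $R$-forcing. Nothing further is needed. The reason a case had to be excluded by hand in \Cref{lem:ccF-claim2} is that \Cref{def:families:ccF} is a \emph{specific finite list} of constructions, and the proof there had to show no minimal graph arises from an omitted combination (such as $\Fo\setssum\Ft$). Here rule~(2) is stated for \emph{every} pair $S,T$ with $S+T=R$, so there is nothing omitted to rule out; in completeness, once you have shown $G_1\in\ccF_S^{Q_T}$ and $G_2\in\ccF_T^{Q_S}$, you are done. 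The phenomenon you describe---that certain decompositions produce no graphs because the relevant refined family is empty---is true (it is essentially \Cref{prop}), but it is a consequence of the lemma, not an ingredient in its proof. In particular, \Cref{fact:Qs} is not used in the paper's proof of this lemma at all.
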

   
\begin{proof}  The proof consists of two claims that verify simple properties of our definitions. 
\begin{claim}
  The minimal parallel sums in $\ccF_R^Q$ are exactly those in 
             \[\ccF_S^{ Q \cap (T\setminus S)} \setpsum \ccF_T^{Q \cap (S\setminus T)} \]
  for symmetric sets $S$ and $T$ with $R = S \cap T$.  (Clearly if $Q \cap (T\setminus S)$ or $Q \cap (S\setminus T)$ are empty, then so is this family.) 
  \end{claim}
  \begin{proof}\claimproof
      Let $G_1 \psum G_2 \in \ccF_R^Q$.  
      Since $\ccF_R^Q$ consists of minimally $R$-forcing graphs, the forced sets of $G_1$
      and $G_2$ are some $S$ and $T$, respectively, with $S \cap T = R$.  
      On the removal of an edge $e$ of $G_1$, the forced set of $G - e$ contains a new element of $Q$, so that of $G_1 - e$ does too. This newly forced element is in $T\setminus S$, since otherwise it would either not be in the forced set of $G - e$, or it would already have been in the forced set of $G$.  
      So it is in $Q \cap (T \setminus S)$. 
      Thus $G_1 \in \ccF_S^{Q \cap (T \setminus S)}$, as needed.
      By the same argument, $G_2 \in \ccF_T^{Q \cap (S \setminus T)}$.

      On the other hand, assume that for $G = G_1 \psum G_2$ we have $G_1 \in \ccF_S^{ Q \cap (T\setminus S)}$ and $G_2 \in \ccF_T^{Q \cap (S\setminus T)}$.  
      Clearly, the forced set of $G$ is $R = S \cap T$. Moreover, on the removal of an edge $e$ of $G_1$ from $G$, the forced set of $G_1 - e$ contains some new element in $Q \cap (T \setminus S)$ so that it is in $Q$ and already in the forced set of $G_2$. So this new element of $Q$ is in the forced set of {$G-e$}.  The same holds when switching $G_1$ and $G_2$.  So on the removal of every edge $e$, the forced set of $G - e$ contains some new element of $Q$. That is, $G \in \ccF_R^Q$, as needed. 
  \end{proof}
  
The following claim is a generalisation of the statement that for a set $R$, the minimal serial sums in $\ccF_R$ are those in 
         \[{\ccF_{S}^{\overline{(S+T)}_T} \setssum \ccF_{T}^{\overline{(S+T)}_S}}\]
      for every symmetirc sets $S$ and $T$ for which $S + T = R$.   Intuitively: 
      when we remove an edge from the factor in 
       $\ccF_{T}^{\overline{(S+T)}_S}$ we want to allow a new colour in its forced set that, when we sum with $S$, gives a colour that was not in $(S + T)$.     

\begin{claim}
The minimal serial sums in $\ccF^Q_R$ (for disjoint $R$ and $Q$)  are  exactly those in 
       \[ \ccF_{S}^{Q_T} \setssum \ccF_{T}^{Q_S} \]
for symmetric sets $S$ and $T$ for which $S + T = R$.       \end{claim}
\begin{proof} \claimproof
Let $G_1 + G_2 = G \in \ccF_R^Q$. Since $\ccF_R^Q$ consists of minimally $R$-forcing graphs, we may assume $G_1$ and $G_2$ force sets $S$ and $T$, respectively, with $R=S+T$.
    On the removal of an edge $e$ from $G_1$, $G- e$ forces a new element of $Q$, so $G_1- e$ forces some new element $x$   such that $x + t \in Q$ for some $t$ in $T$; thus $x$ is in $Q_T$, and so $G_1 \in \ccF_S^{Q_T}$, as needed. Note that as $Q$ is disjoint from $R$, and $S + T = R$, nothing in $S$ is in $Q_T$, so $S$ and $Q_T$ are disjoint, making $\ccF_S^{Q_T}$ well-defined. By the analogous argument, $G_2 \in \ccF_T^{Q_S}$.

On the other hand, let $G = G_1 + G_2$  with $G_1 \in \ccF_S^{ Q_T}$ and $G_2 \in \ccF_T^{Q_S}$ where $S + T = R$.  We have that $G_1$ and $G_2$ force the sets $S$ and $T$, respectively, so $G$ is $R$-forcing. Removing an edge $e$ of $G_1$ from $G$, we get that $G_1- e$ forces some $x \in Q_T$.  As $G_2$ forces $T$, $G-e = (G_1-e)+G_2$ forces {$x+t \in Q$ for some $t \in T$.}  Thus $G - e$ forces some element of $Q$, as needed. 
\end{proof}
\end{proof}

\begin{proof}[Proof of~\Cref{cor:minimal_2k+1}]
Let $G$ be a $C_{2k+1}$-critical series-parallel graph.
By~\Cref{lem:minimal_2k+1}~(i), $G$ is $2$-connected.
This implies that $G = G_1 \psum G_2$ for some $(\us,\vt)$-terminal series-parallel graphs $G_1$ and $G_2$.
By the criticality of $G$, $G_1$ and $G_2$ are $C_{2k+1}$-colourable.
Let $S \neq \emptyset$ and $T \neq \emptyset$ be the forced sets of $G_1$ and $G_2$, respectively.
Since $G$ is not $C_{2k+1}$-colourable, $S$ and $T$ are disjoint, so $S$ and $T$ are proper symmetric subsets of $\Z_{2k+1}$.
By the criticality of $G$, $G_1$ and $G_2$ are minimally $S$-forcing and minimally $T$-forcing, respectively.
Thus, $G_1 \in \ccF_S$ and $G_2 \in \ccF_T$ for some disjoint $S$ and $T$.
For every edge $e$ of $G_1$, the forced set of $G_1 - e$ contains an element in $T$, so $G_1 \in \ccF_S^T$. Similarly we get that $G_2 \in \ccF_T^S$,
which satisfies the conclusion of the theorem with $A = T$ and $B = S$. 
Thus we may let $A \subseteq T$ {and $B \subseteq S$}.

For the converse, let $G = G_1 \psum G_2$ where $G_1\in \ccF_S^A$, $G_2 \in \ccF_T^B$ for some nonempty symmetric subsets $S, T, A, B$ of $\Z_{2k+1}$ such that $S \cap T = \emptyset$, $A \subseteq T$, and $B \subseteq S$.
Since $S \cap T = \emptyset$, $G$ is not $C_{2k+1}$-colourable.
Let $e$ be an edge of $G$, and, without loss of generality, assume that it is an edge of $G_1$.
Since $G_1 \in \ccF_S^A$, $G_1-e$ forces a new element in $A \subseteq T$.
Thus, $G-e = (G_1-e) \psum G_2$ forces a new element in $T$, and thus, $G-e$ is $C_{2k+1}$-colourable.
Therefore, $G$ is a $C_{2k+1}$-critical series-parallel graph.
\end{proof}

\section*{Acknowledgements}
This work was started during the 2023 Winter Workshop in Combinatorics.  Eun-Kyung Cho was supported by Basic Science Research Program through the National Research Foundation of Korea(NRF) funded by the Ministry of Education (No. RS-2023-00244543).
Ilkyoo Choi was supported by the Institute for Basic Science (IBS-R029-C1) and the Hankuk University of Foreign Studies Research Fund.
Boram Park was supported by the National Research Foundation of Korea(NRF) grant funded by the Korea government (MSIT) (No. RS-2025-00523206).
Mark Siggers was  supported by the Basic Science Research Program through the National Research Foundation of Korea (NRF-2022R1A2C1091566) and the Kyungpook National University Research Fund.

\end{document}